\date{\today}
\newtheorem{theorem}{Theorem}[section]
\newtheorem{algorithm}[theorem]{Algorithm}
\newtheorem{corollary}[theorem]{Corollary}
\newtheorem{definition}[theorem]{Definition}
\newtheorem{example}[theorem]{Example}
\newtheorem{lemma}[theorem]{Lemma}
\newtheorem{proposition}[theorem]{Proposition}
\newtheorem{remark}[theorem]{Remark}
\begin{document}
\title[Dirac series of $\mathrm{GL}(n)$ over an Archimedean field]{Dirac series of $\mathrm{GL}(n)$ over an Archimedean field}
\author{Yihao Ding}
\author{Hongfeng Zhang}

\address[Ding]{School of Mathematics, Suzhou University of Science and Technology, Suzhou 215009, China}
\email{dyhqxy0506@163.com}

\address[Zhang]{School of Science and Engineering, The Chinese University of Hong Kong, Shenzhen,
Guangdong 518172, P. R. China}
\email{zhanghongf@pku.edu.cn}

\begin{abstract}
Motivated by the $(\mathfrak{g},K)$-cohomology and Dirac cohomology, we determine  Dirac series of $\mathrm{GL}(n,\mathbb{H})$, and show that the spin lowest $K$-type of any Dirac series, which determines the Dirac cohomology, is unique and multiplicity-free for both $\mathrm{GL}(n,\mathbb{H})$ and $\mathrm{GL}(n,\mathbb{R})$. This verifies a conjecture about uniqueness of the spin lowest $K$-type of Dirac series for $\mathrm{GL}(n,\mathbb{R})$ proposed by Dong and Wong \cite{Dong_Wong_2:2022}.
\end{abstract}

\subjclass[2010]{Primary 22E46.}

\keywords{Dirac cohomology, unitary representation, spin lowest $K$-type.}

\maketitle
\setcounter{tocdepth}{1}

\section{Introduction}
In the 1970s, Parthasarathy \cite{Parthasarathy:1972} used the Dirac operator to construct discrete series of real reductive groups and introduced an inequality, known as Dirac inequality, as a necessary condition for a representation to be unitarizable. In the 1980s, Vogan \cite{Vogan:1986} applied Dirac inequality on the bottom layer $K$-types as a key ingredient to obtain the unitary dual of the general linear group over Archimedean fields, and Barbasch \cite{Barbasch:1989} extended these ideas to  the classical complex groups.  Later in the 1990s, Vogan defined the Dirac cohomology of representations of reductive groups, consisting of certain  $\widetilde{K}$-types, and conjectured that the highest weights of these $\widetilde{K}$-types could tell us the infinitesimal characters of the representations. This conjecture implies that Dirac cohomology is a refinement of Dirac inequality and was proved by Huang-Pand\v{z}i\'c \cite{Huang_Pandzic:2002}.

Dirac series of $G$, denoted by $\widehat{G}^d$, is the set of equivalence classes of irreducible unitary representations with nonzero Dirac cohomology. Among the entire unitary dual, Dirac series are precisely the ones where Parthasarathy's Dirac inequality becomes an equality on some $K$-types. There are lots of important unitary representations contained in Dirac series, such as discrete series, most of $A_{\mathfrak{q}}(\lambda)$ modules, unitary highest weight modules and so on. In view of the research by Barbasch and Pand\v{z}i\'c \cite{Barbasch_Pandzic:2019}, Dirac series should have applications in the theory of automorphic forms as well. Moreover, the problem of classifying $\widehat{G}^d$ itself is also interesting and quite subtle.

The classification of Dirac series is known for the following groups.
\begin{itemize}
\item[(i)] Complex cases: complex classical groups \cite{Barbasch_Dong_Wong:2022}, complex $G_2$, $F_4$, $E_6$, $E_7$, $E_8$  \cite{Dong_Wong_3:2022}.
\item[(ii)] Real cases: $\mathrm{GL}(n,\mathbb{R})$  \cite{Dong_Wong_2:2022}, $\mathrm{U}(n,2)$, $E_{6(2)}$,
$E_{6(6)}$, $E_{6(-26)}$,
$E_{6(-14)}$, $E_{7(7)}$, $E_{7(-25)}$, $E_{7(-5)}$, $E_{8(-24)}$, $F_{4(4)}$, $F_{4(-20)}$, $G_{2(2)}$ \cite{Ding_Dong_Wei:2023}.
\end{itemize}

To compute the Dirac cohomology more effectively,
Dong \cite{Dong:2013} introduced spin norm and spin lowest $K$-type. For Dirac series, the spin lowest $K$-types are exactly the $K$-types contributing to the Dirac cohomology. In the list of the groups whose Dirac series have been classified, for the complex cases, each Dirac series has a unique spin lowest $K$-type with multiplicity one; while in the real cases, Dirac series contain several spin lowest $K$-types, each occurring with multiplicity one, except for $E_{7(7)}$. $E_{7(7)}$ has 125 fully supported Dirac series and two among them have unique spin lowest $K$-type with multiplicity two. 

Note that from spin lowest $K$-types of Dirac series, one can get the Dirac cohomology immediately. Let us show how the uniqueness and multiplicity of spin lowest $K$-types might be important for automorphic forms. By \cite{Huang_Pandzic:2006}, Dirac cohomology has a connection with $(\mathfrak{g},K)$-cohomology, and hence automorphic forms:
Given a unitary representation $\pi$ and a finite dimensional module $F$ of $G$, the $(\mathfrak{g},K)$-cohomology with coefficients in $F$ is connected with Dirac cohomology as follows,
\[\mathop{\bigoplus}\limits_{i\in \mathbb{N}}H^i(\mathfrak{g},K,\pi\otimes F^*)\cong \mathrm{Hom}_{\widetilde{K}}\big(H_D(F),H_D(\pi)\big),\]
where $F^*$ is the dual of $F$ and $H_D(\bullet)$ is the Dirac cohomology of $\bullet$.
Therefore, the Dirac cohomology of $\pi$ certainly helps us understand the $(\mathfrak{g},K)$-cohomology. 
Although the irreducible unitary representation with non-zero $(\mathfrak{g},K)$-cohomology has been classified, which are the $A_{\mathfrak{q}}(\lambda)$ with regular integral infinitesimal characters, and they are also exactly the Dirac series with regular integral infinitesimal characters \cite{Huang_Pandzic:2006}. However, it is interesting that there are many Dirac series with half-integral infinitesimal characters and so zero $(\mathfrak{g},K)$-cohomology. It might be useful to understand such Dirac series for automorphic forms.

In this paper, we first classify Dirac series of $\mathrm{GL}(n,\mathbb{H})$. They are proved to contain unique spin lowest $K$-type with multiplicity one, and such $K$-type can be obtained by the algorithm in the $\mathrm{GL}(n,\mathbb{R})$ case. Then we turn to the $\mathrm{GL}(n,\mathbb{R})$ case: Based on the classification of Dirac series of $\mathrm{GL}(n,\mathbb{R})$ \cite{Dong_Wong_2:2022}, we show that any Dirac series of $\mathrm{GL}(n,\mathbb{R})$ also contains unique spin lowest $K$-type with multiplicity one. This verifies Conjecture 4.5 of \cite{Dong_Wong_2:2022}.
As above, the multiplicity-free results about the spin lowest $K$-types of Dirac series of $\mathrm{GL}(n,\mathbb{H})$ and $\mathrm{GL}(n,\mathbb{R})$ should be interesting for the theory of automorphic forms.

In summary, combining the results of \cite{Barbasch_Dong_Wong:2022} and \cite{Dong_Wong_2:2022}, we complete the classification of the Dirac series of $\mathrm{GL}(n,\mathbb{F})$, $\mathbb{F} = \mathbb{R},\mathbb{C}$ or $\mathbb{H}$, and conclude that any Dirac series of $\mathrm{GL}(n,\mathbb{F})$ contains a unique spin lowest $K$-type with multiplicity one. From the spin lowest $K$-type, we can get the Dirac cohomology of Dirac series of $\mathrm{GL}(n,\mathbb{F})$.

Following our results, one would like to conjecture that any Dirac series of $\widetilde{\mathrm{GL}}(n,\mathbb{R})$, the universal covering of $\mathrm{GL}(n,\mathbb{R})$, contains a unique spin lowest $K$-type with multiplicity one. We will continue to study this case in the future.    
\section{Preliminaries}

\subsection{Dirac cohomology}

We begin with the basic definition of Dirac cohomology, for more details, see \cite{Huang_Pandzic:2006}.
Let $G$ be a real reductive group with a maximal compact subgroup $K$. Let $\mathfrak{g}_0=\mathfrak{k}_0\oplus \mathfrak{p}_0$ be the corresponding Cartan decomposition. Here $\mathfrak{g}_0$, $\mathfrak{k}_0$ denote the Lie algebras of $G$ and $K$  respectively. Let $\mathfrak{t}_0$ be a maximal abelian subalgebra of $\mathfrak{k}_0$. Let $\mathfrak{a}_0$=$Z_{\mathfrak{p}_0}(\mathfrak{t}_0)$, the centralizer of $\mathfrak{t}_0$ in $\mathfrak{p}_0$. Then $\mathfrak{h}=\mathfrak{t}\oplus\mathfrak{a}$ is a fundamental Cartan subalgebra of $\mathfrak{g}$. Here we denote the corresponding complexified Lie algebra by dropping the subscript.

We denote by $\Delta(\mathfrak{g},\mathfrak{h})$ the root system of $\mathfrak{g}$ with respect to $\mathfrak{h}$ and denote by $\Delta(\mathfrak{k},\mathfrak{t})$ the root system of $\mathfrak{k}$ with respect to $\mathfrak{t}$. Fix a positive root system $\Delta^{+}(\mathfrak{k},\mathfrak{t})$, and denote the half sums of roots in it by $\rho_c$.

Let $\langle \cdot, \cdot \rangle$ be a nondegenerate symmetric invariant bilinear form on $\mathfrak{g}_0$, which is positive definite on $\mathfrak{p}_0$ and negative definite on $\mathfrak{k}_0$. Fix an orthonormal basis $Z_1,\dots,Z_n$ of $\mathfrak{p}_0$ with respect to the inner product $\langle \cdot, \cdot \rangle|_{\mathfrak{p}_0}$. Let $U(\mathfrak{g})$ be the universal enveloping algebra of $\mathfrak{g}$, and put $C(\mathfrak{p})$ as the Clifford algebra over $\mathfrak{p}$.
The Dirac operator $D \in U(\mathfrak{g}) \otimes C(\mathfrak{p})$ is defined as $$D=\sum_{i=1}^{n}\, Z_i \otimes Z_i.$$
The operator $D$ does not depend on the choice of an orthonormal basis $Z_i$ of $\mathfrak{p}_0$. Furthermore, it is $K$-invariant for the diagonal
action of $K$ given by adjoint actions on both factors.

Let $\widetilde{K}$ be the subgroup of $K\times \mathrm{Spin}\,\mathfrak{p}_0$ consisting of all pairs $(k, s)$ such that $\text{Ad}(k)=p(s)$, where $\text{Ad}: K\rightarrow {\mathrm{SO}}(\mathfrak{p}_0)$ is the adjoint action, and $p: \mathrm{Spin}\,\mathfrak{p}_0\rightarrow {\mathrm{SO}}(\mathfrak{p}_0)$ is the spin double covering map. Here $\mathrm{SO}(\mathfrak{p}_0)$ is defined with respect to $\langle \cdot, \cdot \rangle$ on $\mathfrak{p}_0$.

If $\pi$ is a $(\mathfrak{g}, K)$ module, and if $S_G$ denotes a spin module for
$C(\mathfrak{p})$, then $\pi \otimes S_G$ is a $(U(\mathfrak{g})\otimes C(\mathfrak{p}),
\widetilde{K})$ module, where $\widetilde{K}$ acts on $\pi$ through $K$ and  acts on $S_G$ through $\mathrm{Spin}\,\mathfrak{p}_0$ and the action of $U(\mathfrak{g})\otimes C(\mathfrak{p})$ is
the obvious one. In particular, the Dirac operator $D$ acts on $\pi \otimes S_G$. Vogan defined the Dirac cohomology of $\pi$ as
$$
H_D(X)=\mathrm{ker} D/(\mathrm{ker} D \cap \mathrm{im} D).
$$
When $\pi$ is unitary, $\mathrm{ker} D$ and $\mathrm{im} D$ intersect trivially and the Dirac cohomology of $\pi$ is simply $\mathrm{ker} D=\mathrm{ker} D^2$.

We conclude this subsection by stating the main result of \cite{Huang_Pandzic:2002}.
\begin{theorem}\label{thm-HP}
Let $\pi$ be an irreducible $(\mathfrak{g}, K)$-module with infinitesimal character $\Lambda\in\mathfrak{h}^*$. If $H_D(\pi)$ is non-zero, let $\widetilde{\gamma}$ be any highest weight of any $\widetilde{K}$-type occurring in $H_D(\pi)$. Then there exists $w\in W(\mathfrak{g}, \mathfrak{h})$ such that
\begin{equation}\label{inf-char-HP}
w \Lambda|_{\mathfrak{a}} = 0, \quad w \Lambda|_{\mathfrak{t}} = \widetilde{\gamma} + \rho_c.
\end{equation}

Assume that $\pi$ is unitarizable. Let $\widetilde{\gamma}$ be any highest weight  of any $\widetilde{K}$-type of $\pi \otimes S_G$. By Parthasarathy’s Dirac inequality, we have
\begin{equation}\label{Dirac_ineq}
\| \widetilde{\gamma}+\rho_c\| \geq  \|\Lambda\|.\end{equation}
Then the equality in \eqref{Dirac_ineq} holds if and only if there exists $w\in W(\mathfrak{g},\mathfrak{h})$ such that \eqref{inf-char-HP} holds.
\end{theorem}

\subsection{Spin lowest $K$-types}

\subsubsection{Decomposition of $S_G$}

We fix positive root systems $\Delta^+(\mathfrak{g},\mathfrak{t})$ and $\Delta^+(\mathfrak{k},\mathfrak{t})$ such that $\Delta^+(\mathfrak{g},\mathfrak{t})$ contains $\Delta^+(\mathfrak{k},\mathfrak{t})$. Denote by $\mathfrak{C}_{\mathfrak{g}}$ (resp. $\mathfrak{C}_{\mathfrak{k}}$) the closed dominant Weyl chamber for $\Delta^+(\mathfrak{g},\mathfrak{t})$(resp. $\Delta^+(\mathfrak{k},\mathfrak{t})$). Define
$$
W(\mathfrak{g},\mathfrak{t})^1=\{w \in W(\mathfrak{g},\mathfrak{t})|w(\mathfrak{C}_{\mathfrak{g}}) \subseteq \mathfrak{C}_{\mathfrak{k}}\}.
$$
By a result of Kostant \cite{Kostant:1961}, the multiplication map gives a bijection from $W(\mathfrak{g},\mathfrak{t})^1 \times W(\mathfrak{k},\mathfrak{t})$ onto $W(\mathfrak{g},\mathfrak{t})$. Then $$\{w\Delta^+(\mathfrak{p},\mathfrak{t})|w \in W(\mathfrak{g},\mathfrak{t})^1\}$$
are exactly all the choices of positive root systems of $\Delta(\mathfrak{p},\mathfrak{t})$ that are compatible with $\Delta^+(\mathfrak{k},\mathfrak{t})$.

Let us enumerate the elements of $W(\mathfrak{g},\mathfrak{t})^1$ as $w^0=e,w^1, \dots, w^{s-1}$. For $0 \leq j \leq s-1$, put
\[(\Delta^+)^{(j)}(\mathfrak{p},\mathfrak{t}):=w^j\Delta^+(\mathfrak{p},\mathfrak{t})\ \text{and}\ (\Delta^+)^{(j)}(\mathfrak{g},\mathfrak{t}):=w^j\Delta^+(\mathfrak{g},\mathfrak{t})=w^j\Delta^+(\mathfrak{p},\mathfrak{t}) \cup \Delta^+(\mathfrak{k},\mathfrak{t}).\]
Denote by $\rho_n^{(j)}$(resp. $\rho^{(j)}$) the half sum of roots in $(\Delta^+)^{(j)}(\mathfrak{p},\mathfrak{t})$(resp. $(\Delta^+)^{(j)}(\mathfrak{g},\mathfrak{t})$).

For any dominant integral weight $\lambda\in i\mathfrak{t}^*$, let $E_{\lambda}$ denote the irreducible $\mathfrak{k}$-module with highest weight $\lambda$.

The $\mathfrak{k}$-structure of $S_G$ is given by the following:

\begin{lemma}[\cite{Wallach:1988}, Lemma 9.3.2]\label{Spin decomposition}
Let $l_0=\dim \mathfrak{a}$, then as $\mathfrak{k}$-modules,
$$
S_G \cong 2^{[l_0/2]}\mathop{\bigoplus}_{w \in W(\mathfrak{g},\mathfrak{t})^1}E_{w\rho-\rho_c}
=2^{[l_0/2]}\mathop{\bigoplus}_{0\leq j\leq s-1}E_{\rho^{(j)}_n}
$$

\end{lemma}

\subsubsection{PRV component}

The following theorem is useful to compute $H_D(\pi)$. It combined with \eqref{Dirac_ineq} tells that any $\widetilde{K}$-type of $H_D(\pi)$ must be a PRV-component of $\pi\otimes S_G$. Let $w_0$ be the longest element of $W(\mathfrak{k},\mathfrak{t})$.

\begin{theorem}[\cite{Parthasarathy_Rao_Varadarajan:1967}, Corollaries 1 and 2 to Theorem 2.1]\label{PRV}

Let $\mu$, $\nu$ be two dominant integral weights. Then $E_{
\{\mu+w_0\nu\}}$ occurs with multiplicity one in $E_{\mu} \otimes E_{\nu}$. Here the braces denote the unique dominant weight to which the indicated weight is conjugate. Moreover, if $E_{\tau}$ is any irreducible component of $E_{\mu} \otimes E_{\nu}$, then $\{\mu+w_0\nu\}$ must be a weight of $E_{\tau}$. In particular, we have
$$
||\{\mu+w_0\nu\}+\rho_c|| \leq ||\tau+\rho_c||
$$ and the equality happens if and only if $\tau=\{\mu+w_0\nu\}$.

The irreducible constituent $E_{\{\mu+w_0\nu\}}$ is called the \textbf{PRV component} of $E_{\mu} \otimes E_{\nu}$.

\end{theorem}

\subsubsection{Spin lowest $K$-type}

To compute the Dirac cohomology of a unitary $(\mathfrak{g},K)$-module $\pi$ effectively, \cite{Dong:2013} introduces and considers \emph{the spin lowest $K$-type} of $\pi$.

The spin norm of a $\mathfrak{k}$-type $E_\mu$ is defined to be
\[||\mu||_{spin}:=\min_{\gamma\in\ \{\text{the highest weights in}\ E_{\mu}\otimes S_G\} } ||\gamma+\rho_c||.\]
By the properties of PRV-component and the decomposition of $S_G$,
\[||\mu||_{spin}=\min_{0 \leq j \leq s-1}||\{\mu-\rho_n^{(j)}\}+\rho_c||.\]
Here $\{\mu-\rho_n^{(j)}\}$ denotes the unique dominant weight to which $\mu-\rho_n^{(j)}$ is conjugate under the action of $W(\mathfrak{k},\mathfrak{t})$.
A $K$-type $\mu$ of $\pi$ is called a \textbf{spin lowest $K$-type} of $\pi$ if its spin norm attains the minimum among all the $K$-types of $\pi$.

With the definition of spin lowest $K$-type and the properties of PRV component at hand, Theorem \ref{thm-HP} can be rephrased as follows.

\begin{corollary} \label{cor-HP}
Let $\pi$ be a unitary $(\mathfrak{g},K)$-module with infinitesimal character $\Lambda \in \mathfrak{h}^*$. Then the following are equivalent to each other.

\begin{itemize}
\item[(i)] $H_D(\pi)$ is non-zero.
\item[(ii)] There exists a $w\in W(\mathfrak{g}, \mathfrak{h})$, and a $K$-type in $\pi$ with highest weight $\tau$ such that
\begin{equation}\label{eq-HP2}
w\Lambda|_{\mathfrak{a}} = 0, \quad \{\tau - \rho_n^{(j)}\} = w\Lambda|_{\mathfrak{t}} - \rho_c.
\end{equation}
\item[(iii)] The highest weight $\tau$ of any spin lowest $K$-type of $\pi$ satisfies  $\|\tau\|_{spin}=\| \Lambda\|$.
\end{itemize}

Moreover, if $H_D(\pi)\neq 0$, then
\[H_D(\pi)\cong \mathop{\bigoplus}\limits_{\tiny \begin{array}{c}\tau\in \{\text{spin lowest}\ K\text{-types of}\ \pi\}, 0\leq j_{\tau}\leq s-1\  \\  \text{s.t.}\ \|\tau\|_{spin}=\|
\{\tau-\rho_n^{(j_{\tau})}\}+\rho_c\|\end{array} } 2^{[l_0/2]} E_{\{\tau-\rho_n^{(j_{\tau})}\}},\]
where $l_0=\dim \mathfrak{a}$.
\end{corollary}

\subsection{Structure theory of $\mathrm{GL}(n,\mathbb{H})$}\label{Str_glnh}
We will focus on $G=\mathrm{GL}(n,\mathbb{H})$ from now to the end of Subsection \ref{spin_glnr}. Let $\theta:\mathrm{GL}(n,\mathbb{H})\to \mathrm{GL}(n,\mathbb{H})$ be the Cartan involution given by $$
\theta(g)=(\overline{g}^{-1})^t,
$$
the inverse transpose conjugation of $g\in \mathrm{GL}(n, \mathbb{H})$, then
$K=G^{\theta} = \mathrm{Sp}(n)$.

Choose a maximal compact Cartan subalgebra as follows,

\begin{equation}\label{RealCartan}
\mathfrak{h}_0=\mathfrak{t}_0+\mathfrak{a}_0=\left\{\left[\begin{array}{cccc}a_{1}+\sqrt{-1}c_{1} & & &\\& a_{2}+\sqrt{-1}c_{2} & & \\ & &\ddots & \\& & & a_{n}+\sqrt{-1}c_{n} \end{array} \right]\right\},\end{equation}
where $\mathfrak{t}_0=\{\mathrm{diag}(\sqrt{-1}c_1,\dots,\sqrt{-1}c_n)\ |\ c_j\in \mathbb{R}\}$, $\mathfrak{a}_0=\{\mathrm{diag}(a_1,\dots,a_n)\ |\ a_j\in \mathbb{R}\}.$

For $1\leq k\leq n$, set $e_k$ be the linear functional sending the element in \eqref{RealCartan} to $\sqrt{-1}c_k$, set $f_k$ be the linear functional sending the element in \eqref{RealCartan} to $a_k$. We define a lexicographic ordering by using the set $e_1,\dots,e_n,f_1,\dots,f_n$.

Then the positive roots are
\[\Delta^+=\left\{\begin{array}{ll}e_j+e_k\pm (f_j-f_k), &  j\neq k\\ e_j-e_k\pm (f_j-f_k), & j<k \\ 2e_l, & 1\leq l\leq n\end{array}\right.\]
So the Vogan diagram is as follows:
\begin{center}
\begin{tikzpicture}
 \draw (1.52,2) circle (4pt) node[left]{ $2e_n$}
     (3,3) circle (4pt) node[above]{ $e_{n-1}-e_n+(f_{n-1}-f_n)$}
     (3,1) circle (4pt) node[below]{ $ e_{n-1}-e_n-(f_{n-1}-f_n)$}
     (9,3) circle (4pt) node[above]{ $e_1-e_2+(f_1-f_2)$}
      (9,1) circle (4pt) node[below]{ $e_1-e_2-(f_1-f_2)$}
     (1.52+0.1,2+0.1)--(3-0.1,3-0.1)
     (1.52+0.1,2-0.1)--(3-0.1,1+0.1)
     (3+0.15,3)--(5,3)
     (3+0.15,1)--(5,1)
     (9-0.15,3)--(7,3)
     (9-0.15,1)--(7,1);
 \draw  [dash pattern=on 5pt off 5 pt] (5,3)--(7,3);
 \draw  [dash pattern=on 5pt off 5 pt] (5,1)--(7,1);
 \draw  [<->] (3,3-0.32)--(3,1+0.32);
 \draw  [<->] (9,3-0.32)--(9,1+0.32);

\end{tikzpicture}
\end{center}

We get the positive restricted root system as follows:
\[\Delta^+(\mathfrak{g}, \mathfrak{t})=\Delta^+(\mathfrak{k}, \mathfrak{t}) \cup \Delta^+(\mathfrak{p}, \mathfrak{t}),\]
where \
\[\Delta^+(\mathfrak{p}, \mathfrak{t})=\{e_i\pm e_j \mid 1\leq i<j\leq n\}, \quad
\Delta^+(\mathfrak{k}, \mathfrak{t})=\Delta^+(\mathfrak{p}, \mathfrak{t})\cup \{2e_i\mid 1\leq i\leq n\}.\]
So the Weyl group $W(\mathfrak{g},\mathfrak{t})=W(\mathfrak{k},\mathfrak{t})$. Let us denote the half sum of roots in $\Delta^+(\mathfrak{g}, \mathfrak{t})$ (resp. $\Delta^+(\mathfrak{k}, \mathfrak{t})$, $\Delta^+(\mathfrak{p}, \mathfrak{t})$) by $\rho$ (resp. $\rho_c$, $\rho_n$). Then under the basis $\{e_i, 1\leq i\leq n\}$ of $\mathfrak{t}^*$,
$$
\rho_c=(n, n-1, \dots, 1), \quad \rho_n=\rho-\rho_c=(n-1, n-2, \dots, 0) \in \mathfrak{t}^*.
$$

\subsubsection{$K$-types of $\mathrm{Sp}(n)$} \label{sec-ktypes}
By the Cartan-Weyl theorem, the equivalence class of irreducible representations of $K = \mathrm{Sp}(n)$ (denoted as $\widehat{K}$)
can be parametrized by dominant analytically integral linear functionals on $\mathfrak{t}$. Under the above basis $\{e_i,1\leq i\leq n\}\subset \mathfrak{t}^*$, $\widehat{K}$ can be parametrized by  $\gamma$, where $\gamma=(\gamma_1, \dots, \gamma_n)$ is a decreasing sequence of non-negative integers (cf. (5.28) of \cite{Vogan:1986}). We use $E_{\gamma}$ to denote the $K$-type parametrized by $\gamma$.

\subsubsection{Spin lowest $K$-types in the $\mathrm{GL}(n,\mathbb{H})$ case}
Let $E_{\gamma}$ be a $K$-type, then the spin norm of $E_{\gamma}$ is defined to be
$\| \{\gamma-\rho_n\}+\rho_c\|$. As Corollary \ref{cor-HP}, one has the following.
\begin{corollary} \label{cor-HP-glnH}
Let $G = \mathrm{GL}(n,\mathbb{H})$ and $\pi$ be a unitary $(\mathfrak{g},K)$-module with infinitesimal character $\Lambda \in \mathfrak{h}^*$. Then the following are equivalent to each other.

\begin{itemize}
\item[(i)] $H_D(\pi)$ is non-zero.
\item[(ii)] There exists a $w\in W(\mathfrak{g}, \mathfrak{h})$, and a $K$-type in $\pi$ with highest weight $\tau$ such that
\begin{equation}\label{slkt-cond}
w\Lambda|_{\mathfrak{a}} = 0, \quad \{\tau - \rho_n\} = w\Lambda|_{\mathfrak{t}} - \rho_c.
\end{equation}
\item[(iii)] The highest weight $\tau$ of any spin lowest $K$-type of $\pi$ satisfies  $\|\{\tau-\rho_n\}+\rho_c\|=\| \Lambda\|$.
\end{itemize}

Moreover, if $H_D(\pi)\neq 0$, then
\[H_D(\pi)\cong \mathop{\bigoplus}\limits_{\tau\in \{\text{spin lowest}\ K\text{-types of}\ \pi\} } 2^{[n/2]} E_{\{\tau-\rho_n\}}.\]

\end{corollary}

\subsection{Infinitesimal character}
Suppose $\pi$ is a unitary $(\mathfrak{g},K)$-module satisfying \eqref{eq-HP2} in Corollary \ref{cor-HP},
with an infinitesimal character $\Lambda =  (\lambda_1, \dots, \lambda_{2n}) \in \mathfrak{h}^*$,
so
$$
\Lambda=\sum_{k=1}^n\lambda_k(e_k+f_k)+\sum_{k=1}^{n}\lambda_{n+k}(-e_{n-k+1}+f_{n-k+1}).
$$

Therefore, under the basis $\{e_i|1\leq i\leq n\}$ of $\mathfrak{t}^*$ and the basis $\{f_i| 1\leq i\leq n\}$ of $\mathfrak{a}^*$,
$$
\Lambda|_{\mathfrak{t}} = (\lambda_1 - \lambda_{2n}, \lambda_2 - \lambda_{2n-1}, \dots, \lambda_n - \lambda_{n+1}),$$
$$
\Lambda|_{\mathfrak{a}} = (\lambda_1 + \lambda_{2n}, \lambda_2 + \lambda_{2n-1}, \dots, \lambda_n + \lambda_{n+1}).
$$

Consequently, in the classification of
the Dirac series for $\mathrm{GL}(n, \mathbb{H})$, it suffices to focus on the unitary representations whose infinitesimal
characters $\Lambda \in \mathfrak{h}^*$ are of the form $\Lambda|_{\mathfrak{a}} = 0$ and
\begin{equation} \label{eq-HP}
\Lambda|_{\mathfrak{t}} = (2\lambda_1, \dots, 2\lambda_{n})
\end{equation}
where $2\lambda_1 > \dots > 2\lambda_n $ is a sequence of positive integers. By abuse of notations, we will denote the infinitesimal character $\Lambda \in \mathfrak{h}^*$
by its restriction $\Lambda|_{\mathfrak{t}}$ to $\mathfrak{t}$, from now on.

By Theorem 6.18 and Theorem 17.6 of \cite{Vogan:1986}, all the irreducible unitary representations with half integral infinitesimal characters are unitarily induced by the following three kinds of unitary representations: the special unipotent representations, the $A_{\mathfrak{q}}(\lambda)$ modules, and a kind of Stein complementary series.

\subsection{The special unipotent representations}
The special unipotent representations of $\mathrm{GL}(n,\mathbb{H})$ are of the form
\[\mathrm{Ind}_{\prod_{i=1}^m\mathrm{GL}(n_i,\mathbb{H})}^{\mathrm{GL}(n,\mathbb{H})}(\mathbf{1}\boxtimes\dots\boxtimes \mathbf{1}).\]

\subsection{Cohomological induction}
Let $G$ be a Lie group in the Harish-Chandra class with Cartan involution $\theta$ and the maximal compact subgroup $K$. Denote the Lie algebra of $G$, $K$ by $\mathfrak{g}_0 , \mathfrak{k}_0$. Let $\mathfrak{q}=\mathfrak{l}+\mathfrak{u}$ be a $\theta$-stable parabolic subalgebra of $\mathfrak{g}$. Let $\mathfrak{h}$ be a Cartan subalgebra of $\mathfrak{l}$. Then $\mathfrak{h}$ is also a Cartan subalgebra of $\mathfrak{g}$. Let $L$ be the normalizer of $\mathfrak{q}$ in $G$. Let $\mathfrak{z}$ be the center of $\mathfrak{l}$.

Let us arrange the positive root systems in a compatible way, that is $\Delta(\mathfrak{u},\mathfrak{h})\subseteq \Delta^+(\mathfrak{g},\mathfrak{h})$ and set $\Delta^+(\mathfrak{l},\mathfrak{h})=\Delta(\mathfrak{l},\mathfrak{h})\cap \Delta^+(\mathfrak{g},\mathfrak{h})$. Let $\rho(\mathfrak{u})$ denote the half sum of roots in $\Delta(\mathfrak{u},\mathfrak{h})$. 

Let $\langle \cdot, \cdot \rangle$ denote the nondegenerate symmetric invariant bilinear form on $\mathfrak{g}_0$. This form extends by complexification to all of $\mathfrak{g}$, and by restriction to nondegenerate forms on both $\mathfrak{l}$ and $\mathfrak{h}$. One can use  $\langle \cdot, \cdot \rangle$ to identify $\mathfrak{h}$ and $\mathfrak{h}^*$.
\begin{definition} \label{good range}
With $\langle \cdot, \cdot \rangle$ as above, suppose that the $(\mathfrak{l}, L\cap K)$-module $Z$ has an  infinitesimal character $\Lambda$. We say that $Z$ or $\Lambda$ is in the {\bf weakly good range} or that Z is {\bf weakly good} (relative to $\mathfrak{q}$ and $\mathfrak{g}$) if
\begin{equation}\label{weakly-good}
{\rm Re} \langle \Lambda + \rho(\mathfrak{u}), \alpha \rangle \ge 0, \quad \forall \alpha\in \Delta(\mathfrak{u}, \mathfrak{h}).
\end{equation}
If all the inequalities in \eqref{weakly-good} are strict, then $Z$ is said to be in the {\bf good range}.

We say that $Z$ is {\bf weakly fair} if
\begin{equation}\label{weakly-fair}
{\rm Re} \langle \Lambda + \rho(\mathfrak{u}), \alpha|_{\mathfrak{z}} \rangle \ge 0, \quad \forall \alpha\in \Delta(\mathfrak{u}, \mathfrak{h}),
\end{equation}
If all the inequalities in \eqref{weakly-fair} are strict, then $Z$ is said to be in the {\bf fair range}.

\end{definition}

For a detailed study of cohomological induction, one may refer to \cite{Knapp_Vogan:1995}. Here we use the same notation as \cite{Knapp_Vogan:1995}. The cohomological induction functors $\mathcal{L}_i$ and $\mathcal{R}^i$ lift $(\mathfrak{l}, L \cap K)$ to $(\mathfrak{g},K)$-modules. Let us state the following result (Theorem 0.50 of \cite{Knapp_Vogan:1995}).

\begin{theorem}\label{vanishing}
Let $Z$ be an $(\mathfrak{l},L \cap K)$-module of finite length with  infinitesimal character $\Lambda$, and suppose $Z$ is weakly good. Let $S=\dim \mathfrak{u} \cap \mathfrak{p}$. Then 
\begin{itemize}
\item[(a)] $\mathcal{L}_i(Z)=\mathcal{R}^i(Z)=0$ for $i \neq S$,
\item[(b)] $\mathcal{L}_S(Z) \cong \mathcal{R}^S(Z)$,
\item[(c)] $Z$ irreducible implies $\mathcal{L}_S(Z)$ is irreducible or zero.
\end{itemize}
\end{theorem}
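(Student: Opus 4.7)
The plan is to reduce the statement to the three pillars of cohomological induction theory developed in \cite{KnV}: a vanishing theorem, the Hard Duality theorem, and the irreducibility theorem, each powered by the weakly good hypothesis through the infinitesimal-character shift by $\rho(\fu)$.

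For part (a), I would first unwind the definitions: $\CR^j(Z)$ is the Zuckerman functor $\Gamma^j$ applied to the produced module from $\fq$ up to $\fg$ (with the appropriate $\wedge^{\mathrm{top}}\fu$ twist), and $\CL_j(Z)$ is its Bernstein counterpart built from the induced module. Under the weakly good hypothesis, $\lambda_L + \rho(\fu)$ is dominant on $\Delta(\fu,\fh)$, which supplies the positivity input needed to bound the degrees in which relative Lie algebra (co)homology contributes. A Hochschild--Serre / Grothendieck spectral sequence argument, combined with the standard vanishing for $(\fg, L\cap K)$-cohomology in the presence of a dominant infinitesimal character (the Casselman--Osborne / Wallach-type input), then collapses everything outside middle degree $S = \dim(\fu \cap \fp)$.

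For part (b), I would invoke the Hard Duality theorem (Theorem 3.5 of \cite{KnV}), which provides a natural isomorphism $\CL_j(Z) \cong \CR^{2S-j}(Z)$ after absorbing a modular-character twist into the normalization of $\rho(\fu)$. Feeding in the vanishing from (a) forces $j = 2S - j = S$, so the duality specializes to the required isomorphism $\CL_S(Z) \cong \CR^S(Z)$.

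For part (c), assume $Z$ is irreducible. The key is to compute $\fu$-cohomology of $\CL_S(Z)$: using Hard Duality a second time, $H^S(\fu, \CL_S(Z))$ recovers $Z$ (with the appropriate twist), and it is the unique constituent with infinitesimal character $\lambda_L + \rho(\fu)$. Any nonzero $(\fg, K)$-submodule $M \subseteq \CL_S(Z)$ then has $H^S(\fu, M)$ a nonzero submodule of $Z$, hence equal to $Z$ by irreducibility; a standard argument (using that $\CL_S$ is the left adjoint in the relevant derived sense) then forces $M = \CL_S(Z)$. The main obstacle is the boundary of the weakly good region, where the $\rho(\fu)$-shifted infinitesimal character only satisfies weak inequalities; here I would deform $\lambda$ into the strict good range, apply the good-range irreducibility for generic parameter, and pass to the limit using the exactness of $\CL_S$ guaranteed by (a), or else appeal directly to the finer Jantzen-type composition-series analysis of \cite{KnV}.
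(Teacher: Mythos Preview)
The paper does not prove this statement; it is simply quoted as Theorem~0.50 of \cite{KnV}, with no argument given. So there is no ``paper's own proof'' to compare your proposal against.

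That said, your sketch is a reasonable high-level outline of how the proof actually goes in \cite{KnV}: vanishing outside middle degree via positivity of $\lambda_L+\rho(\fu)$ on $\Delta(\fu,\fh)$, Hard Duality to identify $\CL_S$ with $\CR^S$, and irreducibility through control of $\fu$-cohomology (more precisely, the bottom-layer map and the Irreducibility Theorem, \cite{KnV}, Theorem~8.2). A couple of minor inaccuracies: Hard Duality in \cite{KnV} gives $\CL_j \cong \CR^{2u-j}$ with $2u = \dim_{\bR}(\fu \cap \frk_0)$, not $2S$ with $S = \dim(\fu \cap \fp)$ as written here, so the degree bookkeeping in your part~(b) is off (indeed the paper itself wavers between $\dim\fu\cap\fp$ and $\dim\fu\cap\frk$ for $S$); and the irreducibility argument in \cite{KnV} does not proceed by deformation to the strict good range but rather by a direct analysis of the bottom layer and signature characters. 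But since the paper treats this as a black box, none of this needs to appear in your write-up: a citation to \cite{KnV}, Theorem~0.50, is all that is required.
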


By \cite[Theorem 0.51]{Knapp_Vogan:1995}, we can know that if $Z$ is a weakly good unitary $(\mathfrak{l},L \cap K)$-module, then $\mathcal{L}_S(Z)$ is a unitary $(\mathfrak{g},K)$-module, whose infinitesimal character is $\Lambda+\rho(\mathfrak{u})$. In the special case that $Z$ is a one-dimensional unitary $(\mathfrak{l},L \cap K)$-module, the unitary criterion can be relaxed to weakly fair range. That is, if $Z$ is a weakly fair one dimensional unitary $(\mathfrak{l},L \cap K)$-module, then $\mathcal{L}_S(Z)$ is a unitary $(\mathfrak{g},K)$-module.

\subsection{The $A_{\mathfrak{q}}(\lambda)$ modules} \label{Aqlambda}

We now consider an interesting collection of unitary representations of $\mathrm{GL}(k,\mathbb{H})$.
Let $\lambda=(\frac{b}{2},\dots,\frac{b}{2},-\frac{b}{2},\dots,-\frac{b}{2})\in \mathfrak{h}^*$ for integers $b\geq -k$. Let $\mathfrak{l}=\mathfrak{g}\mathfrak{l}(k,\mathbb{C})$, $L=\mathrm{GL}(k,\mathbb{C})$, and let $\mathbb{C}_{\lambda}$ (or $\mathbb{C}_b$) be the unitary character of $L$:
\[g \mapsto (\frac{\det(g)}{|\det(g)|})^{b},\ \forall g \in L.\]
Let $\mathfrak{q}=\mathfrak{l}+\mathfrak{u}$ be a $\theta$-stable parabolic subalgebra with Levi subalgebra $\mathfrak{l}$. Then
$A_{\mathfrak{q}}(\lambda):=\mathcal{R}^S_\mathfrak{q}(\mathbb{C}_\lambda)$ is an irreducible unitary representation. Since $b\geq -k$, $A_{\mathfrak{q}}(\lambda)$ is weakly fair. Moreover, $A_{\mathfrak{q}}(\lambda)$ has infinitesimal character
\[(b+2k-1,\dots,b+3,b+1)\in\mathfrak{t}^*,\]
and has lowest $K$-type with highest weight $(b+k-1,\dots,b+k-1)$.

\subsection{Unitary dual of $\mathrm{GL}(n,\mathbb{H})$ with half-integral regular infinitesimal characters}
Under the notations in \cite{Vogan:1986}, the Stein complementary series $\sigma(1,t)$ has half integral infinitesimal character
if and only if $t=1/4$.

Therefore, we obtain the following results.
\begin{theorem}\label{half_int_reg}
All the irreducible unitary representations whose infinitesimal characters satisfy condition \eqref{eq-HP} are of the form,
\begin{equation}\label{chain-rep}
\mathrm{Ind}_{\prod_{i=1}^t\mathrm{GL}(k_i,\mathbb{H})\times \mathrm{GL}(r,\mathbb{H})}^{\mathrm{GL}(n,\mathbb{H})} \big(\mathop{\boxtimes}
\limits_{i=1}^t A_{\mathfrak{q}_i}(\lambda_i)\boxtimes \mathbf{1}\big)
\end{equation}
where $A_{\mathfrak{q}_i}(\lambda_i)$ is the representation of $\mathrm{GL}(k_i,\mathbb{H})$ considered in subsection \ref{Aqlambda}.
\end{theorem}
\begin{remark}
Since the infinitesimal character satisfying condition \eqref{eq-HP} is regular, there are no Stein complementary series appearing, and the special unipotent representation showing up must be the trivial representation.
\end{remark}

\section{Dirac series and their spin lowest $K$-types}
In this section, we study Dirac cohomology of all irreducible unitary representations of $\mathrm{GL}(n,\mathbb{H})$.
At first, we recall some notions about chains as given in \cite{Dong_Wong_1:2022}.

\subsection{Describe infinitesimal characters by chains }
We call
$$
\mathcal{C} := (C, C-2, \dots, c+2, c)
$$
a chain which is a descending sequence of non-negative integers with a common difference of two. We use it to denote the infinitesimal character of the unitary representations $A_{\mathfrak{q}}(\lambda)$ or the unipotent representations.

More precisely, we call the chain $\mathcal{C}$ an s-chain or u-chain if $\mathcal{C}$ is the infinitesimal character of an $A_{\mathfrak{q}}(\lambda)$ or some unipotent representation, respectively, and write it as $\mathcal{C}=(C, C-2, \dots, c+2, c)_s$ or $\mathcal{C}=(C, C-2, \dots, c+2, c)_u$, respectively.

\begin{definition}\label{chain}
\begin{enumerate}
\item Two chains $\mathcal{C}_1 = (A, A-2,\dots, a)_{\bullet}$, $\mathcal{C}_2 = (B, B-2, \dots, b)_{\bullet}$ ($\bullet = s \mbox{ or } u$)
are {\bf linked} if
the entries of $\mathcal{C}_1$ and $\mathcal{C}_2$ are disjoint, and either one of the following holds:
\begin{itemize}
\item $A > B > a$; or
\item $B > A > b$; or
\item $\{\mathcal{C}_1, \mathcal{C}_2\} = \{(2k, 2k-2, \dots, 2)_u, (1)_s\}$, where $k$ is a positive integer.
\end{itemize}
\item We say a union of chains  $\displaystyle \bigcup_{i \in I} \mathcal{C}_i$ is {\bf interlaced} if for each $i \neq j$ in $I$,
there exists indices $i = i_0, i_1, \dots, i_k = j$ in $I$ such that $\mathcal{C}_{i_{l-1}}$ and
$\mathcal{C}_{i_{l}}$ are linked for all $1 \leq l \leq k$ (by convention, we also let the single chain $\mathcal{C}$ to be interlaced). Furthermore, we say the interlaced chains {\bf u-interlaced} if it contains exactly one u-chain.
\end{enumerate}
\end{definition}

\begin{remark}
The case $\{\mathcal{C}_1, \mathcal{C}_2\} = \{(2k, 2k-2, \dots, 2)_u, (1)_s\}$ does not appear in the $\mathrm{GL}(n,\mathbb{H})$ case, but happens in the $\mathrm{GL}(n,\mathbb{R})$ ($n$ is odd) case.  
\end{remark}

For convenience, by the abuse of notation, for $a\in \mathbb{R}$, $m\in \mathbb{N}$, let $a^m$ denote the vector $(\underbrace{a,\dots,a}_m)$. For the rest of the content, all of $a^m$ will mean a vector in $\mathbb{R}^m$ rather than a number ``$a$ to the power $m$". 

For $c,d\in \mathbb{R}$ such that $c-d\in 2\mathbb{N}$, let $[c,d]$ denote the chain $(c,c-2,\dots,d)$.
For $c,d\in \mathbb{R}$ such that $c-d\in \mathbb{Z}$, let
\[\{c,d\}:=\begin{cases} (\underbrace{c,c+1,\dots,d}_{d-c+1}), & d>c, \\ (\underbrace{c,c-1,\dots,d}_{c-d+1}), & d\leq c.\end{cases}\]

As in \cite{Dong_Wong_2:2022}, we can associate the union of chains $\bigcup_{i=1}^t \mathcal{A}_i \bigcup \mathcal{U}$ to the induced representation \eqref{chain-rep} where $$\mathcal{A}_i = (b_i + 2k_i - 1, b_i + 2k_i - 3,  \dots ,b_i  + 1)_s;\ \text{and}\ \mathcal{U}=(2r-1,2r-3,\dots,1)_u.$$
\begin{remark}
$\bigcup_{i=1}^t \mathcal{A}_i \bigcup \mathcal{U}$ are precisely the infinitesimal character of the induced representation \eqref{chain-rep} restricted to  $\mathfrak{t}$.
\end{remark}

\subsection{The Dirac series of $\mathrm{GL}(n,\mathbb{H}$) and their spin lowest $K$-types}

\begin{theorem}\label{thm-main}
Let $\pi$ be an irreducible unitary $(\mathfrak{g},K)$-module of $\mathrm{GL}(n,\mathbb{H})$. Then $\pi$ has nonzero Dirac cohomology if and only if $\pi$ is of the form $$\mathrm{Ind}_{\prod_{i=1}^t\mathrm{GL}(k_i,\mathbb{H})\times \mathrm{GL}(r,\mathbb{H})}^{\mathrm{GL}(n,\mathbb{H})} \big(\mathop{\boxtimes}\limits_{i=1}^t A_{\mathfrak{q}_i}(\lambda_i)\boxtimes \mathbf{1}\big),$$ 
where $A_{\mathfrak{q}_i}(\lambda_i)$ is the representation of $\mathrm{GL}(k_i,\mathbb{H})$ considered in Subsection \ref{Aqlambda}, such that the coordinates of the chain $\Lambda = \bigcup_{i=1}^t \mathcal{A}_i \cup \mathcal{U}$ associated to $\pi$ are different from each other.

Moreover, if $\pi$ has nonzero Dirac cohomology, then $\pi$ contains unique spin lowest $K$-type with multiplicity one, which can be obtained by the following way:
\begin{itemize}
\item[(i)] Let $\mathcal{L}_i=\underbrace{(b_i+k_i-1,\dots,b_i+k_i-1)}_{k_i}$, so $\bigcup_{i=1}^t \mathcal{L}_i$ is the extremal weight of the lowest $K$-type of $$\mathrm{Ind}_{\prod_{i=1}^t \mathrm{GL}(k_i,\mathbb{H})}^{\mathrm{GL}(k,\mathbb{H})}\big(\mathop{\boxtimes}\limits_{i=1}^t A_{\mathfrak{q}_i}(\lambda_i)\big),\ k=\sum_{i=1}^t{k_i}.$$ 
Then the highest weight of its spin lowest $K$-type $\bigcup_{i=1}^t \mathcal{T}_i$ can be obtained from $\bigcup_{i=1}^t \mathcal{L}_i$ by applying Algorithm 
\ref{algorithm1} (i.e. Algorithm 2.2 of \cite{Dong_Wong:2021}).
\item[(ii)] Let $\mathcal{T}_u=(\underbrace{0,0,\dots,0}_r)$ be the highest weight of the trivial $K$-type
of the trivial representation of $\mathrm{GL}(r,\mathbb{H})$. \item[(iii)]  One can obtain $\mathcal{S}_i$, $\mathcal{S}_u$  from $\mathcal{T}_i$ ,$\mathcal{T}_u$ by applying Algorithm 
\ref{algorithm2} (i.e. the method of Theorem 3.1 of \cite{Dong_Wong_1:2022}). Then
\[\mathcal{S}=(\mathcal{S}_1;\dots;\mathcal{S}_t;\mathcal{S}_u)\] is the highest weight of the unique spin lowest K-type of $\pi$.
\end{itemize}
\end{theorem}

\begin{proof}
Firstly, by Theorem \ref{half_int_reg}, $\pi$ has nonzero Dirac cohomology only if $\pi$ is of the form in the statement. We show that such $\pi$ has non-zero Dirac cohomology, and contains a spin lowest $K$-type which can be obtained by the way in the statement: 

Actually, by a similar argument to Theorem 3.1 of \cite{Dong_Wong_1:2022}, or alternatively by the proof of Theorem \ref{uni_m1_GLnH}, the $\mathcal{S}$ defined in the statement is the highest weight of a $K$-type of $\pi$ and the $\mathcal{S}$ satisfies \eqref{slkt-cond}. Hence, $\pi$ has non-zero Dirac cohomology as (iii) of Corollary \ref{cor-HP-glnH}.

As for the uniqueness and multiplicity freeness of spin lowest $K$-type, it will be proved in Theorem \ref{uni_m1_GLnH}.
\end{proof}

We are now to describe the algorithm for the spin-lowest $K$-type of the module $\pi$ in Theorem \ref{thm-main}.

\begin{algorithm}\label{algorithm1}
Let $\mathcal{C}_1=(a,a-2,\dots,a-2p+2)$ and $\mathcal{C}_2=(b,b-2,\dots,b-2q+2)$ be a pair of linked chains such that their coordinates are different from each other. Let $\zeta_1=(a-p)^p$ and $\zeta_2=(b-q)^q$ corresponding to $\mathcal{C}_1$ and $\mathcal{C}_2$.
We will change the coordinates of $\zeta_1$ and $\zeta_2$ by the following rule: 

Case (I):  $a > b \geq b-2q+2 > a-2p+2$. Assume that
$a-2m+2 > b > a-2m$ for some $1\leq m\leq p-1$, then change the coordinates of $\zeta_1$ and $\zeta_2$ into
\[\begin{array}{c}\zeta_1'=\zeta_1+\big(0^{m};p-m,p-m-1,\dots,p-m-q+1;0^{p-m-q}\big), \vspace{0.8em} \\
\zeta_2'=\zeta_2+\big(-(p-m),-(p-m-1),\dots,-(p-m-q+1)\big).\end{array}\]
  
Case (II): $a > b > a-2p+2 > b-2q+2$. Assume that
$a-2m > b > a-2m-2$ for some $0\leq m\leq p-2$, then change the coordinates of $\zeta_1$ and $\zeta_2$ into
\[\begin{array}{c}\zeta_1'=\zeta_1+\big(0^{m};1,2,\dots,(p-m)\big), \vspace{0.8em} \\ 
\zeta_2'=\zeta_2+\big(-1,-2,\dots,-(p-m);0^{q-p+m}\big).\end{array}\] 
\end{algorithm}

 \begin{algorithm}\label{algorithm2}
Let $\mathcal{C}=(a,a-2,\dots,a-2p+2)$ with $a-2p+2>1$, and $\mathcal{U}=(2r-1,2r-3,\dots,1)$ be a pair of linked chains such that their coordinates are different from each other.
We will change the coordinates of $\zeta=(a-p)^p$ and $\zeta_0=0^r$ by the following rule: 

Case (I):  $a-2m+2>2r-1>a-2m$ for some $0\leq m\leq p-2$. 
Then we change the coordinates of $\zeta$ and $\zeta_0$ into
\[\begin{array}{c}\zeta'=\zeta+\big(0^{m},p-m,p-m-1, \dots,1\big), \vspace{0.8em}\\
\zeta_0'=\zeta_0+(p-m,p-m-1,\dots,1;0^{r-p+m}).\end{array}\]
 
Case (II): $2r-2x+1>a-2p+2>2r-2x-1$ for some $1\leq x\leq r-1$
Then we change the coordinates of $\zeta$ and $\zeta_0$ into
\[\begin{array}{c}\zeta'=\zeta+\big(x,x-1,\dots,x-p+1\big), \vspace{0.8em} \\
\zeta_0'=\zeta_0+(0^{x-p},x,x-1,\dots,x-p+1,0^{r-x}).\end{array}\]
\end{algorithm}

\subsection{FS-scattered representations}

We conclude this section by classifying the FS-scattered representations of $\mathrm{GL}(n,\mathbb{H})$.

Recall that the FS-scattered representations are the representations of $\widehat{G}^d$ which cannot be cohomologically induced from any representations (not necessarily from $\widehat{G}^d$) of any proper $\theta$-stable Levi subgroup in the weakly good range. See \cite{Ding_Dong:2020} for details.
The following theorem tells that the Dirac cohomology behaves well when doing cohomological induction under the weakly good range.

\begin{theorem}[\cite{Dong_Huang:2015}, Theorem 5.7]
Let $G$ be a real reductive Lie group in Harish-Chandra class. Suppose that the irreducible $(\mathfrak{l},L\cap K)$-module $Z$ has infinitesimal character $\lambda\in i\mathfrak{t}_0^*$ which is weakly good. Then we have the following $\widetilde{K}$-module isomorphism
\[\mathcal{L}^{\widetilde{K}}_S(H_D(Z)\otimes \mathbb{C}_{-\rho(\mathfrak{u}\cap \mathfrak{p})})\cong H_D(\mathcal{L}_S(Z)).\]
Moreover, the spin lowest $K$-types of $\mathcal{L}_S(Z)$ are contained in $\mathcal{L}_S^K(E_Z)$ for some spin lowest $K$-type $E_Z$ of $Z$. If $Z$ has unique spin lowest $K$-type with multiplicity one, then either $\mathcal{L}_S(Z)$ has zero Dirac cohomology or it has unique spin lowest $K$-type with multiplicity one.
\end{theorem}

Therefore, we can focus on the FS-scattered representations to understand the whole Dirac series. In the last section, in order to prove the multiplicity-free of the spin lowest $K$-type of any Dirac series of $\mathrm{GL}(n,\mathbb{H})$ or $\mathrm{GL}(n,\mathbb{R})$, we only need to prove it for the FS-scattered representations. The following propositions can be argued similarly to Section 5 of  \cite{Dong_Wong_2:2022}

\begin{proposition}\label{FS-sc}
Let $\pi$ be an irreducible unitary $(\mathfrak{g},K)$-module of $\mathrm{GL}(n,\mathbb{H})$ with nonzero Dirac cohomology. Then
$\pi$ is an FS-scattered representation if and only if its corresponding chains are $u$-interlaced.
\end{proposition}
\begin{proof}
For necessity: Assume that the chains of $\pi$ do  not contain the $u$-chain, then $\pi$ can be written as a cohomological induction from a module $Z$ (see the beginning of Subsection \ref{spin_glnr}) of $\mathrm{GL}(n,\mathbb{C})$ in weakly good range.  Suppose the chains  $\bigcup_{i=1}^t \mathcal{A}_i \cup \mathcal{U}$ of $\pi$ are not interlaced. We may decompose them as 
\[\bigcup_{i=1}^t \mathcal{A}_i \cup \mathcal{U}=\mathcal{C}_1\cup (\mathcal{C}_2\cup\mathcal{U}),\]
such that for all coordinates $x\in\mathcal{C}_1$, $y\in \mathcal{C}_2\cup\mathcal{U}$, one has $x\geq y$. Then we can write $\pi$ as a cohomological induction from a representation $\pi_1\boxtimes\pi_2$, where $\pi_1$ is associated to the chain $\mathcal{C}'_1$ which is a shift of $\mathcal{C}_1$ by some constant, $\pi_2$ is associated to the chains $\mathcal{C}'_2\cup\mathcal{U}$, where $\mathcal{C}'_2$ is a shift of $\mathcal{C}_2$ by some constant. By our assumptions about $\mathcal{C}_1$ and $\mathcal{C}_2$, the infinitesimal character $\mathcal{C}'_1\cup\mathcal{C}'_2\cup\mathcal{U}$ of $\pi_1\boxtimes\pi_2$ satisfies \eqref{weakly-good}. Therefore, $\pi$ is cohomologically induced in the weakly good range.

For sufficiency: Assume $\pi$ is cohomologically induced from an irreducible $(\mathfrak{l},L\cap K)$-module $\pi_L$ in the weakly good range. By cohomological induction in stages, one can assume  $L=\mathrm{GL}(n_1,\mathbb{C})\times \mathrm{GL}(n_2,\mathbb{H})$ with $n_1+n_2=n$. Notice that $\pi_L$ is also unitary since it is in the weakly good range. By the classification of unitary representations of $L$, we know that $\pi_L$ is also corresponds to some chains, from which one can also get the chains of $\pi$. Since the chains of $\pi$ are unique (by classification of unitary dual of $\mathrm{GL}(n,\mathbb{H})$), one can get the chains of $\pi$ are not $u$-interlaced from the condition that $\pi_L$ is in the weakly good range. 
\end{proof}

\begin{proposition}
There are totally $2^{n-2}$ FS-scattered representations of $\mathrm{GL}(n,\mathbb{H})$ for $n\geq 2$.
\end{proposition}
\begin{proof}
Notice that the number of the $u$-interlaced chains are one-one corresponding to the interlaced chains whose minimal coordinates are $1$. Now the proposition follows from \cite[Corollary 3.9]{Dong_Wong:2021}.
\end{proof}

\begin{example}

We now list all FS-scattered representations of $\mathrm{GL}(4,\mathbb{H})$. All u-interlaced chains for $\mathrm{GL}(4,\mathbb{H})$ are as follows:
\[\ \begin{aligned}&(7,5,3,1)_u,\end{aligned}\qquad\begin{aligned}&\qquad(4)_s   \\  & \quad (5,\quad3,\quad1)_u      \end{aligned},\qquad\begin{aligned}&\qquad\qquad(2)_s   \\  & \quad (5,\quad3,\quad1)_u     \end{aligned},\qquad\begin{aligned} \quad(4,\quad2)_s \quad\\ \quad(3,\quad1)_u
\end{aligned}.\]
We can list the Dirac series in the form of \eqref{chain-rep} corresponding to the above four chains respectively, and use algorithms \ref{algorithm1} and \ref{algorithm2} to get the spin lowest $K$-types:
\begin{table}[H]
\caption{FS-scattered representations of $\mathrm{GL}(4,\mathbb{H})$}
\begin{tabular}{lcc}
\hline
$\text{Parameters}$&\qquad\qquad \qquad $\text{Lowest $K$-types}$&\qquad \qquad \qquad $\text{Spin lowest $K$-types}$\\ \hline
$(7,5,3,1)_u$ &\qquad \qquad \qquad $(0,0,0,0)$&\qquad \qquad \qquad $(0,0,0,0)$\\
$(4)_s\cup(5,3,1)_u$&\qquad \qquad \qquad $(3,0,0,0)$&\qquad \qquad \qquad $(4,1,0,0)$\\
$(2)_s\cup(5,3,1)_u$&\qquad\qquad \qquad $(1,0,0,0)$&\qquad \qquad\qquad $(3,2,0,0)$\\
$(4,2)_s\cup(3,1)_u$&\qquad \qquad\qquad $(2,2,0,0)$&\qquad \qquad\qquad $(3,2,1,0)$\\
\hline

\end{tabular}
\end{table}

\end{example}

\section{The uniqueness and multiplicity-one of spin lowest $K$-type}

\subsection{The uniqueness and multiplicity-one of spin lowest $K$-type in the $\mathrm{GL}(n,\mathbb{H})$ case}\label{spin_glnr}

 Recall that under the coordinates in subsection \ref{Str_glnh}, we say that a weight of $\mathfrak{t}$, $(a_1,\dots,a_n)\in \mathbb{Z}^n$, is dominant of type $A$ (resp. type $C$) if $a_1\geq \dots\geq a_n$ (resp. $a_1\geq \dots\geq a_n\geq 0$). The Weyl group $W(\mathfrak{k},\mathfrak{t})$ can be identified with $S_n\ltimes (\mathbb{Z}/2\mathbb{Z})^{n}$ with $S_n$ permuting the coordinates and $(\mathbb{Z}/2\mathbb{Z})^{n}$ flipping signs. 
 
 Continuing with the notation of Theorem \ref{thm-main}. We write $\pi$ as a cohomologically induced module: Let
 \[L_1=\mathrm{GL}(k,\mathbb{C}), \ L_2=\mathrm{GL}(r,\mathbb{H}),\ \text{and}\ L=L_1\times L_2\]
 and let $\mathfrak{l}$ be the complexification of the Lie algebra of $L$. 
 Let $\mathfrak{q}=\mathfrak{l}\oplus\mathfrak{u}$  be the $\theta$-stable parabolic subalgebra. Let
\[W^1=\{w\in W(\mathfrak{k},\mathfrak{t})\ |\ \langle w\rho_c,\alpha\rangle>0, \forall \alpha\in \Delta^+(\mathfrak{l}\cap \mathfrak{k})\},\]
which consists of the elements $w\in S_n\ltimes (\mathbb{Z}/2\mathbb{Z})^{n}$ such that $w(n,\dots,1)=(x_1,\dots,x_k\ |\ y_1,\dots,y_r)$ with $x_1>\dots>x_k$ and $y_1>\dots>y_r>0$.

Let $Z$ be the unitary $L_1$-representation
$\mathrm{Ind}_{\prod_{i=1}^t \mathrm{GL}(k_i,\mathbb{C})}^{\mathrm{GL}(k,\mathbb{C})}\big(\mathop{\boxtimes}\limits_{i=1}^t \mathbb{C}_{\beta_i'}\big)$, where $\mathcal{C}_{\beta_i'}$ is the unitary character $(\frac{\det}{|\det|})^{b_i+k_i-k-2r}$ of $\mathrm{GL}(k_i,\mathbb{C})$.
By Theorem 17.6 of \cite{Vogan:1986}, one can express
$
\pi \cong \mathcal{R}_{\mathfrak{q}}^N(Z \boxtimes \mathbf{1})
$
as a cohomologically induced module in the weakly fair range, where $N=\dim \mathfrak{u} \cap \mathfrak{k}.$

Then by Theorem 6.3.12 of \cite{Vogan:1981}, the Blattner-type formula gives the multiplicity of any $K$-type $E_{\tau}$ in $\pi$:
$$
\left[\pi: E_{\tau}\right]_K = \sum_{q} (-1)^q \left[\left( Z \boxtimes \mathbf{1} \right) \otimes \mathbb{C}_{2\rho(\mathfrak{u} \cap \mathfrak{p})} \otimes S(\mathfrak{u} \cap \mathfrak{p}): H^q(\mathfrak{u} \cap \mathfrak{k}, E_{\tau})\right]_{L \cap K},
$$
where
\begin{equation}\label{coh_lk}
H^q(\mathfrak{u}\cap \mathfrak{k},E_{\tau})=\mathop{\bigoplus}\limits_{l(w_1)=q,w\in W^1}E^{L\cap K}_{w(\tau+\rho_c)-\rho_c}\end{equation}
where
$S(\mathfrak{u} \cap \mathfrak{p})$ is the symmetric algebra of $\mathfrak{u} \cap \mathfrak{p}$, and $\mathbb{C}_{2\rho(\mathfrak{u} \cap \mathfrak{p})}\cong {\det}^{k+2r-1} \boxtimes \mathbf{1}$ is the character of $L\cap K$ acting on the top wedge product of $\mathfrak{u} \cap \mathfrak{p}$. 

Let
\[Z^{\#}=Z\otimes_{\mathbb{C}}(\frac{\det}{|\det|})^{k+2r-1},\] 
then $(Z\boxtimes \mathbf{1})\otimes \mathbb{C}_{2\rho(\mathfrak{u} \cap \mathfrak{p})}\cong Z^{\#}\boxtimes \mathbf{1}$ as $L\cap K$-modules.

The following well-known lemma will be used frequently in the remaining. Its proof follows from highest weight theory. 
\begin{lemma}\label{high_lem}
Let $V_1$ and $V_2$ be two finite-dimensional irreducible highest weight module of a reductive Lie algebra, then the highest weights in $V_1\otimes V_2$ is of the form $\mu_1+\mu_2$, where $\mu_1$ is highest weight of $V_1$, and $\mu_2$ is a weight of $V_2$.
\end{lemma}

Now, let us describe the structure of $L\cap K$-module $S(\mathfrak{u}\cap\mathfrak{p})$ in the following two lemmas, which will used to prove Theorem \ref{uni_m1_GLnH}.

\begin{lemma}\label{sup_glnh}
Let
\[\begin{aligned}\mathcal{E}_1 & =\mathop{\bigoplus}\limits_{\tiny\begin{array}{c} a_1 \geq \dots \geq a_x\geq 0,\\ x\leq \min\{k,r\}\end{array}}
\big(E_{\tiny(\underbrace{a_1,\dots,a_x,0,\dots,0}_{k})}^{\mathrm{U}(k)}\boxtimes E_{\tiny(\underbrace{a_1,\dots,a_x,0,\dots,0}_{r})}^{\mathrm{Sp}(r)}\big),\\ \mathcal{E}_2& =\mathop{\bigoplus}\limits_{\tiny\begin{array}{c}a_1\geq \dots \geq a_k\geq 0; c_1\geq \dots \geq c_{r}\geq 0;\\ \sum_{i=1}^ka_i>\sum_{i=1}^{r}c_i\end{array}}   \big(E_{(a_1,\dots,a_k)}^{\mathrm{U}(k)}\boxtimes E_{(c_1,\dots,c_{r})}^{\mathrm{Sp}(r)}\big) \end{aligned}\]
 be the $\mathrm{U}(k)\times \mathrm{Sp}(r)$-modules. Then each $L\cap K\cong \mathrm{U}(k)\times \mathrm{Sp}(r)$-type in $\mathcal{E}_1$  shows up in $S(\mathfrak{u}\cap\mathfrak{p})$ with multiplicity one. 
Moreover, the set of $\mathrm{U}(k)\times \mathrm{Sp}(r)$-types in $\mathcal{E}_1\oplus \mathcal{E}_2$ contain the set of $L\cap K\cong \mathrm{U}(k)\times \mathrm{Sp}(r)$-types showing up in $S(\mathfrak{u}\cap\mathfrak{p})$, where the sets are not counting the multiplicities.
\end{lemma}

\begin{proof}
Firstly, as $L\cap K$ representation,
\[\mathfrak{u}\cap\mathfrak{p} \cong \big(E_{(1,0,\dots,0)}^{L_1\cap K}\boxtimes E_{(1,0,\dots,0)}^{L_2\cap K}\big)\oplus
\big(E_{(1,1,0,\dots,0)}^{L_1\cap K}\boxtimes E_{(0,\dots,0)}^{L_2\cap K}\big).\]

As $\mathrm{GL}(k,\mathbb{C})\times \mathrm{GL}(2r,\mathbb{C})$-module, by Corollary 5.6.6 of \cite{Goodman_Wallach:2009}, we have
\[S\big(E_{(1,0,\dots,0)}^{\mathrm{U}(k)}\boxtimes E^{\mathrm{U}(2r)}_{(1,0,\dots,0)}\big)\cong \bigoplus_{a_1\geq \dots \geq a_x\geq 0}\big(E_{(a_1,\dots,a_x,0,\dots,0)}^{\mathrm{U}(k)}\boxtimes E_{(a_1,\dots,a_x,0,\dots,0)}^{\mathrm{U}(2r)}\big).\]

By the Lemma \ref{GLtoSp} below, any $L\cap K\cong \mathrm{U}(k)\times \mathrm{Sp}(r)$-type in $\mathcal{E}_1$  shows up with multiplicity one in 
\[S\big(E^{\mathrm{U}(k)}_{(1,0,\dots,0)}\boxtimes E^{\mathrm{U}(2r)}_{(1,0,\dots,0)}\big)|_{\mathrm{U}(k)\times \mathrm{Sp}(r)}\cong S\big(E_{(1,0,\dots,0)}^{L_1\cap K}\boxtimes E_{(1,0,\dots,0)}^{L_2\cap K}\big),\]
of which the $\mathrm{U}(k)\times \mathrm{Sp}(r)\cong L\cap K$-types is contained in the set of $\mathrm{U}(k)\times \mathrm{Sp}(r)\cong L\cap K$-type in $\mathcal{E}_1\oplus \mathcal{E}_2$, without counting the multiplicities. Moreover, when $l>0$, the $L\cap K\cong \mathrm{U}(k)\times \mathrm{Sp}(r)$-types showing up in
\[S^m(E^{L_1\cap K}_{(1,0,\dots,0)}\boxtimes E^{L_2\cap K}_{(1,0,\dots,0)})\otimes S^l(E^{L_1\cap K}_{(1,1,\dots,0)}\boxtimes E^{L_2\cap K}_{(0,0,\dots,0)})\]
are contained in $\mathcal{E}_2$.  Since $S(\mathfrak{u}\cap\mathfrak{p}) \cong \mathop{\oplus}\limits_{m,l\in \mathbb{N}}\Big(S^m\big(E_{(1,0,\dots,0)}^{L_1\cap K}\boxtimes E_{(1,0,\dots,0)}^{L_2\cap K}\big)\otimes
S^l\big(E_{(1,1,0,\dots,0)}^{L_1\cap K}\boxtimes E_{(0,\dots,0)}^{L_2\cap K}\big)\Big)$, one get the lemma immediately.
\end{proof}

\begin{lemma}\label{GLtoSp}
Any irreducible module of the restriction of $\mathrm{GL}(2r,\mathbb{C})$-module $E^{\mathrm{GL}(2r,\mathbb{C})}_{(a_1,\dots,a_{2r})}$, $a_1\geq \dots\geq a_{2r}$, to $\mathrm{Sp}(2r,\mathbb{C})$ has highest weight
\[(a_1-a_{2r},\dots,a_{r}-a_{r+1})+(\mu_1,\dots,\mu_r),\] where $(\mu_1,\dots,\mu_r)$ satisfies $\sum_{i=1}^j\mu_i\leq 0$ for any $1\leq j\leq r$.
Moreover,  if $a_{r+1}=\dots=a_{2r}=0$, then
\[E^{\mathrm{GL}(2r,\mathbb{C})}_{(a_1,\dots,a_{r},0,\dots,0)}|_{\mathrm{Sp}(2r,\mathbb{C})}=E^{\mathrm{Sp}(2r,\mathbb{C})}_{(a_1,\dots,a_r)}\oplus \bigoplus_{\sum_{i=1}^r c_i<\sum_{i=1}^r a_i} m_{(c_1,\dots,c_r)}E^{\mathrm{Sp}(2r,\mathbb{C})}_{(c_1,\dots,c_r)},\]
for some multiplicities $m_{(c_1,\dots,c_r)}\in\mathbb{N}$. \end{lemma}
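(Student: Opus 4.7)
The plan is to reduce the branching problem to understanding which weights of $V:=E^{GL(2s,\bC)}_{(a_1,\ldots,a_{2s})}$ become $Sp(2s,\bC)$-dominant upon restriction to the symplectic Cartan. Under the embedding $(h_1,\ldots,h_s)\mapsto(h_1,\ldots,h_s,-h_s,\ldots,-h_1)$ of the $Sp$-Cartan into the $GL$-Cartan, a $GL$-weight $\nu=(\nu_1,\ldots,\nu_{2s})$ restricts to $(\nu_1-\nu_{2s},\ldots,\nu_s-\nu_{s+1})$, and every $Sp$-highest weight appearing in the decomposition arises this way from the $GL$-weight of some $Sp$-highest weight vector in $V$.

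For the first inequality, I would write the $Sp$-restriction as $(a_1-a_{2s},\ldots,a_s-a_{s+1})+(\mu_1,\ldots,\mu_s)$ and combine the identity
\[
\sum_{k=1}^j(\nu_k-\nu_{2s+1-k})=\sum_{k=1}^j\nu_k+\sum_{k=1}^{2s-j}\nu_k-|\lambda|
\]
with the standard partial-sum bound $\sum_{k=1}^m\nu_k\leq\sum_{k=1}^m a_k$, which holds for every weight $\nu$ of $V$ because $\lambda-\nu$ is a nonnegative integer combination of positive $GL$-roots $e_i-e_j$ with $i<j$. Applying this at $m=j$ and $m=2s-j$ and using $|\nu|=|\lambda|$ immediately yields $\sum_{k=1}^j\mu_k\leq 0$ for every $j\leq s$.

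For the moreover statement I would first choose the symplectic form so that the upper-triangular Borel of $Sp(2s,\bC)$ is contained in the upper-triangular Borel of $GL(2s,\bC)$; the $GL$-highest weight vector in $V$ is then automatically $Sp$-annihilated with $Sp$-weight $(a_1,\ldots,a_s)$, so $E^{Sp(2s,\bC)}_{(a_1,\ldots,a_s)}$ appears with multiplicity at least one. Both multiplicity one and the strict inequality $\sum c_i<\sum a_i$ on the other summands follow from the equality case of the partial-sum bound: any $GL$-weight $\nu$ of $V$ with $\nu|_{Sp}=(c_1,\ldots,c_s)$ and $\sum c_i=\sum a_i$ automatically satisfies $\sum_{k=1}^s\nu_k=|\lambda|$, which forbids positive roots $e_i-e_j$ with $i\leq s<j$ from contributing to $\lambda-\nu$; then in the $GL(s,\bC)\times GL(s,\bC)$ Levi decomposition the constraint $\sum_{k=s+1}^{2s}\nu_k=0$ restricts $\nu$ to the isotypic piece $E^{GL(s,\bC)}_{(a_1,\ldots,a_s)}\boxtimes E^{GL(s,\bC)}_{(0,\ldots,0)}$, where the only weight with $\nu|_{Sp}=(c_1,\ldots,c_s)$ is $\nu=\lambda$. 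The main obstacle is this final Levi-isotypic identification step, which can be sidestepped by invoking Littlewood's restriction rule $E^{GL(2s)}_\lambda|_{Sp(2s)} = \bigoplus_\mu \bigl(\sum_\beta c^\lambda_{\mu,2\beta}\bigr)\,E^{Sp(2s)}_\mu$ (with $2\beta$ ranging over partitions having even-length columns), which for $\lambda=(a_1,\ldots,a_s,0,\ldots,0)$ picks out $(\mu,\beta)=(\lambda,\emptyset)$ as the unique summand with $|\mu|=|\lambda|$.
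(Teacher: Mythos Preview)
Your proof is correct, and it is largely parallel to the paper's but with a cleaner execution in the first half.

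For the first assertion, the paper also works with a $GL$-weight component $v_i$ of an $Sp$-highest weight vector, but instead of your partial-sum inequality it argues that the $GL$-highest weight vector lies in $U(\mathfrak{b})v_i$ for a $GL$-Borel $\mathfrak{b}\supset\mathfrak{b}_0$, then writes $\mathfrak{b}=\mathfrak{b}_0\oplus(\text{extra root spaces})$ and tries to express the $GL$-weight of $v_i$ as $\lambda$ minus a nonnegative combination of those extra roots. Your route via the identity $\sum_{k\le j}(\nu_k-\nu_{2s+1-k})=\sum_{k\le j}\nu_k+\sum_{k\le 2s-j}\nu_k-|\lambda|$ together with the standard bound $\sum_{k\le m}\nu_k\le\sum_{k\le m}a_k$ is both shorter and avoids the bookkeeping of which root vectors lie outside $\mathfrak{b}_0$; it also makes the equality case transparent, which you exploit later.

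For the ``moreover'' part the two proofs converge: the paper simply invokes the Littlewood--Richardson branching rule (Knapp, Theorem~9.76), which is exactly the fallback you propose. Your attempted direct argument gets quite far---you correctly deduce that any $Sp$-highest weight vector with $\sum c_i=\sum a_i$ is in fact a single $GL$-weight vector of weight $(c_1,\dots,c_s,0,\dots,0)$ lying in the Levi component $E^{GL(s)}_{(a_1,\dots,a_s)}\boxtimes\bC$---but, as you note, this does not by itself force $c=a$, so Littlewood's rule (picking out $\delta=\emptyset$ when $|\mu|=|\lambda|$) is still needed to finish. That is precisely where the paper lands as well.
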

\begin{proof}
Assume that $v$ is the highest weight vector of an irreducible $\mathrm{Sp}(2r,\mathbb{C})$-module which appears in $E^{\mathrm{GL}(2r,\mathbb{C})}_{(a_1,\dots,a_{2r})}$, and $v$ has weight $(c_1,\dots,c_r)$ with $c_1\geq \dots \geq c_r\geq 0$. Decompose $v$ into the sum of the weight vectors in $\mathrm{GL}(2r,\mathbb{C})$-module as
\[v=v_1+\dots+v_m\]
where $v_i$ and $v_j$ have different weights for any $1\leq i\neq j\leq m$.
For any $v_i$, let $(d_1,\dots,d_{2r})$ be the weight of $v_i$, then
$c_j=d_j-d_{2r-j}$, for all $1\leq j\leq r$.

Let $\mathfrak{b}$ be a Borel subalgebra of $\mathfrak{g}\mathfrak{l}(2r,\mathbb{C})$ which contains a Borel subalgebra $\mathfrak{b}_0$ of $\mathfrak{sp}(2r,\mathbb{C})$. Then the highest weight vector of the $\mathrm{GL}(2r,\mathbb{C})$-module is in $U(\mathfrak{b})v$, where $U(\mathfrak{b})$ is the universal inveloping algebra of $\mathfrak{b}$.
Under the notation in \cite[Section II.1]{Knapp:2002},
\[\mathfrak{b}=\mathfrak{b}_0 \oplus \bigoplus_{1\leq i<j\leq r} E_{e_i-e_{r+j}} \oplus \bigoplus_{1\leq i<j\leq r} E_{e_i-e_j},\]
and $v$ is highest vector of $\mathfrak{sp}(2r,\mathbb{C})$-module, one has 
\[U(\mathfrak{b})v=U(\mathop{\bigoplus}\limits_{1\leq i<j\leq r} E_{e_i-e_{r+j}} \oplus \mathop{\bigoplus}\limits_{1\leq i<j\leq r} E_{e_i-e_j})v.\] 
In particular, there exists some $1\leq i_0\leq m$ such that the highest weight vector of $\mathfrak{gl}(2r,\mathbb{C})$-module is in $U(\mathop{\bigoplus}\limits_{1\leq i<j\leq r} E_{e_i-e_{r+j}} \oplus \mathop{\bigoplus}\limits_{1\leq i<j\leq r} E_{e_i-e_j})v_{i_0}$. Hence, 
\[(a_1,\dots,a_{2r})=(d_1,\dots,d_{2r})+\sum_{1\leq i<j\leq r}f_{ij}(e_i-e_{r+j}) +\sum_{1\leq i<j\leq r}g_{ij}(e_i-e_j),\]
for some $f_{ij},g_{ij}\geq 0$.

Therefore, $(c_1,\dots,c_r)=(d_1-d_{2r},\dots,d_{r}-d_{r+1})=(a_1-a_{2r},\dots,a_{r}-a_{r+1})+(\mu_1,\dots,\mu_r)$, where $(\mu_1,\dots,\mu_r)$ satisfies $\sum_{i=1}^j\mu_i\leq 0$ for any $1\leq j\leq r$.

Moreover,  if $\sum c_i=\sum a_i$, then $a_{r+1}=\dots=a_{2r}=0$. By the Littlewood-Richardson branching rules, see Theorem 9.76 of \cite{Knapp:2002},  we know
\[E^{\mathrm{GL}(2r,\mathbb{C})}_{(a_1,\dots,a_{r},0,\dots,0)}|_{\mathrm{Sp}(2r,\mathbb{C})}=E^{\mathrm{Sp}(2r,\mathbb{C})}_{(a_1,\dots,a_r)}\oplus \bigoplus_{\sum_{i=1}^r c_i<\sum_{i=1}^r a_i} m_{(c_1,\dots,c_r)}E^{\mathrm{Sp}(2r,\mathbb{C})}_{(c_1,\dots,c_r)}.\]
Now the lemma follows immediately.
\end{proof}

The following lemma is about the $K$-types of the induced module of $\mathrm{GL}(k,\mathbb{C})$, which will be used to prove Theorem \ref{uni_m1_GLnH} and Theorem \ref{uni_m1_GLnR}.

\begin{lemma}\label{kt_of_zs}
Let $V$ be the representation of $\mathrm{GL}(k,\mathbb{C})$
\[ \mathrm{Ind}_{\prod_{i=1}^t\mathrm{GL}(k_i,\mathbb{C})}^{\mathrm{GL}(k,\mathbb{C})}\big(
\mathop{\boxtimes}\limits_{i=1}^t\mathbb{C}_{\beta_i}\big),\]
where $\mathbb{C}_{\beta_i}$ is the unitary character $(\frac{\det}{|\det|})^{\beta_i}$, $\beta_i\in \mathbb{N}$, of $\mathrm{GL}(k_i,\mathbb{C})$. Let $\zeta_V$ denote $(\beta_1^{k_1};\dots;\beta_t^{k_t})$. Let $\gamma$ be a highest weight of a $\mathrm{U}(k)$-type $E_{\gamma}$ of $V$, then
\begin{itemize}
\item[(i)] $\gamma$ is in the set
\[\big\{\zeta_V+d\ |\
 d\in \mathbb{N}\langle\Delta(\mathfrak{u}_0)\rangle\big\},\]
 where $\mathfrak{u}_0$ is the nilpotent radical of the standard parabolic subalgebra whose Levi factor is $\mathfrak{l}_0=\bigoplus\limits_{i=1}^t\mathfrak{g}\mathfrak{l}(k_i,\mathbb{C})$, and $\mathbb{N}\langle\Delta(\mathfrak{u}_0)\rangle$ are all linear combinations of the roots in $\mathfrak{u}_0$ with coefficients in $\mathbb{N}$.

\item[(ii)] 
Let $d$ be the component of $\gamma$ in (i), so $d=(d_1;\dots;d_t)$, $d_i \in \mathbb{Z}^{k_i}$.
Write $d_1=(\delta_1,\dots,\delta_{k_1})$, $d_2=(\nu_1,\dots,\nu_{k_2})$. Assume that $d_2$ satisfies that there exist some $1\leq s \leq k_2$, s.t.
\begin{equation}\label{cond_dq+1}
\nu_{k_2-x+1}<0,\ \forall\   1 \leq x\leq s,\end{equation}
that is, the last $s$ coordinates of $d_2$ are $<0$.
Then $k_1\geq s$ and 
\begin{equation}\label{htilde}
\delta_x \geq -\nu_{k_2-x+1}, \forall\ 1\leq x\leq s.
\end{equation}

\item[(iii)] Following (ii), assume that 
\[d_1=(\delta_1,\dots,\delta_s,0^{k_1-s})\  \text{and}\ d_2=(\nu_1,\dots,\nu_{k_2-s},-\delta_s,\dots,-\delta_1)\] 
with $\delta_{\bullet}\geq 0$. Let $\varepsilon\in \mathbb{N}$ such that $\varepsilon\leq \delta_1$. Define $\Psi: \mathbb{R}^k\to \mathbb{R}^{k-k_1-s}$ by 
\[\Psi\big((y_1,\dots,y_k)\big)=(y_{k_1+s+1},\dots,y_k)-(\varepsilon^{k_2-s};0^{k-k_1-k_2}).\]
By assumption on $\varepsilon$,  $\Psi(\gamma)$ is dominant of type $A$.

Let $V'=\mathrm{Ind}_{\mathrm{GL}(k_2-s,\mathbb{C})\times \prod_{i=3}^t\mathrm{GL}(k_i,\mathbb{C})}^{\mathrm{GL}(k-k_1-s,\mathbb{C})}\big(
\mathbb{C}_{\beta_2-\varepsilon}\boxtimes(\mathop{\boxtimes}\limits_{i=3}^t\mathbb{C}_{\beta_i})\big)$ be the induced representation of $\mathrm{GL}(k-k_1-s,\mathbb{C})$.
Then we have 
\[\big[E_{\gamma}^{\mathrm{U}(k)} : V\big]_{\mathrm{U}(k)}=\big[E_{\Psi(\gamma)}^{\mathrm{U}(k-k_1-s)} : V'\big]_{\mathrm{U}(k-k_1-s)}.\]

\end{itemize}
\end{lemma}
\begin{proof}
For (i): By Frobenius reciprocity, $E_{\gamma}|_{\mathfrak{l}_0}$ contains submodule $\big(
\mathop{\boxtimes}\limits_{i=1}^t\mathbb{C}_{\beta_i}\big)$. Let $\mathfrak{b}_0$ be the standard Borel subalgebra.

The highest weight vector of $E_{\gamma}$ is contained in
\[U(\mathfrak{b}_0)\big(
\mathop{\boxtimes}\limits_{i=1}^t\mathbb{C}_{\beta_i}\big)=U(\mathfrak{u}_0)\big(\mathop{\boxtimes}
\limits_{i=1}^t\mathbb{C}_{\beta_i}\big),\]  where $U(\bullet)$ denotes the enveloping algebra of $\bullet$. So we get (i).

For (ii): Given any $\eta_1\in \mathbb{Z}^{m_1}$ and $\eta_2\in\mathbb{Z}^{m_2}$, $m_1, m_2\in \mathbb{N}_+$, define 
\[\mathcal{I}(\eta_1,\eta_2):=\mathrm{Ind}_{\mathrm{U}(m_1)\times \mathrm{U}(m_2)}^{\mathrm{U}(m_1+m_2)} \big(E_{\eta_1}^{\mathrm{U}(m_1)}\boxtimes E_{\eta_2}^{\mathrm{U}(m_2)}\big).\]

Let $\gamma_1=(\beta_1^{k_1})\in \mathbb{Z}^{k_1}, \gamma_2\in \mathbb{Z}^{k_1+k_2},\dots,\gamma_{t-1}\in \mathbb{Z}^{k_1+\dots+k_{t-1}}, \gamma_t=\gamma$ be the dominant weights of type $A$ such that $E_{\gamma_{j}}^{\mathrm{U}(\sum_{i=1}^jk_i)}$ shows up in $\mathcal{I}_j:=\mathcal{I}(\gamma_{j-1},\beta_j^{k_j})$ for all $2\leq j\leq t$. Similar to the proof of (i), by definition of $\gamma_j$, one has
\begin{equation}\label{eta_y+1}
\gamma_{j}\in (\gamma_{j-1};\beta_{j}^{k_{j}})+\mathbb{N}\langle\Delta(\mathfrak{u}_{0,j})\rangle,\ \forall 2\leq j\leq t,
\end{equation}
where $\mathfrak{u}_{0,j}$ is the nilpotent radical of the standard parabolic subalgebra of $\mathfrak{g}\mathfrak{l}(\sum\limits_{i=1}^jk_i,\mathbb{C})$, whose Levi factor is 
$\mathfrak{g}\mathfrak{l}(\sum\limits_{i=1}^{j-1}k_i,\mathbb{C})\oplus \mathfrak{g}\mathfrak{l}(k_j,\mathbb{C})$.  

Moreover, by Theorem 9.2.3 of \cite{Goodman_Wallach:2009} and Littlewood-Richardson rules, Lemma 7.4 of \cite{Vogan:1986} implies 
\[\gamma_2=(\gamma_1;\beta_2^{k_2})
+(\alpha_1,\dots,\alpha_{l},0^{k_1-l};
0^{k_2-l},-\alpha_{l},\dots,-\alpha_1),\]
where $0\leq l \leq \min\{k_1,k_2\}$ and $\alpha_1\geq \dots\geq \alpha_{l}>0$. By \eqref{eta_y+1}, 
\[\begin{aligned}d=(d_1;d_2;\dots)=& \ (\alpha_1,\dots,\alpha_{l},0^{k_1-l};
0^{k_2-l},-\alpha_{l},\dots,-\alpha_1;\underbrace{\cdots}_{k-k_1-k_2\ \text{entries}})+\\ &(\underbrace{\cdots}_{k_1+k_2\ \text{entries}\ \geq 0}; \underbrace{\cdots}_{k-k_1-k_2\ \text{entries}\ \leq 0}).\end{aligned}\]
Hence
\[\delta_x \geq \alpha_x,\quad \nu_{k_2-x+1}\geq -\alpha_x, \ \forall 1\leq x\leq l,\]
and $\nu_{k_2-x+1}\geq 0$ for $x>l$. By the condition \eqref{cond_dq+1}, one get $l\geq s$, so $k_1\geq s$ and \eqref{htilde} follows immediately.

For (iii): Notice that
\begin{equation}\label{ga2_not_d}\big[E_{\gamma}^{\mathrm{U}(k)} : V\big]_{\mathrm{U}(k)} =\sum_{\tiny\begin{array}{c} \gamma_1=(\beta_1^{k_1}),\{\gamma_2,\dots,\gamma_t\}\\ \text{given in the proof of (ii)}\end{array}} \prod_{j=2}^{t}\big[E_{\gamma_{j}}^{\mathrm{U}(\sum_{i=1}^jk_i)}: \mathcal{I}_j\big]_{\mathrm{U}(\sum_{i=1}^{j}k_i)}.\end{equation}
By the assumption of $d_1,d_2$ in (iii) and the above discussion of (ii), the $\gamma_2$ contributes non-zero to the left side of \eqref{ga2_not_d} only if
\[\gamma_2=(\beta_1^{k_1};\beta_2^{k_2})+(\delta_1,\dots,\delta_s,0^{k_1-s};0^{k_2-s},-\delta_s,\dots,-\delta_1).\]
Moreover, Lemma 7.4 of \cite{Vogan:1986} implies  
\[\big[E_{\gamma_2}^{\mathrm{U}(k_1+k_2)}:\mathcal{I}(\beta_1^{k_1},\beta_2^{k_2})\big]_{\mathrm{U}(k_1+k_2)}=1.\]
Hence,
\begin{equation}\label{sum_prod_gam}
\big[E_{\gamma}^{\mathrm{U}(k)} : V\big]_{\mathrm{U}(k)} =\sum_{\tiny\begin{array}{c}\gamma_2 \ \text{as above}, \\ \gamma_3, \dots,\gamma_t=\gamma\ \text{given in (ii)}\end{array}} \prod_{j=3}^{t}\big[E_{\gamma_{j}}^{\mathrm{U}(\sum_{i=1}^jk_i)}: \mathcal{I}_j\big]_{\mathrm{U}(\sum_{i=1}^{j}k_i)}.\end{equation}

By the assumption about $d_1$ and $d_2$ in (iii), the \eqref{eta_y+1} implies 
\[\gamma_{j}=(\gamma_{j-1};\beta_{j}^{k_{j}})+(0^{k_1};\underbrace{\dots}_{k_2-s};0^s;\underbrace{\dots}_{\sum_{i=3}^{j}k_i}), \forall j\geq 3.\]
Define $\Psi_j: \mathbb{R}^{\sum_{i=1}^jk_i}\to \mathbb{R}^{(\sum_{i=2}^jk_i)-s}$ for any $j\geq 2$ by 
\[\Psi_j\big((y_1,\dots;y_{\sum_{i=1}^jk_i}\big)=(y_{k_1+1},\dots,y_{k_1+k_2-s};y_{k_1+k_2+1},\cdots,y_{\sum_{i=1}^jk_i})-(\varepsilon^{k_2-s};0^{\sum_{i=3}^jk_i}).\]
 
Apply (i) of Lemma \ref{red_tsp} to the case $m_1=\sum_{i=1}^{j-1}k_i$, $m_2=k_j$, $\eta_1=\gamma_{j-1}$, $\eta_2=(\beta_j)^{k_j}$, $\eta=\gamma_j$, $h=k_1$, $p=k_2-s$, $q=s$, one has $\forall j\geq 3$,
\begin{equation}\label{comp_psi}
\big[E_{\gamma_{j}}^{\mathrm{U}(\sum_{i=1}^jk_i)}: \mathcal{I}_j\big]_{\mathrm{U}(\sum_{i=1}^{j}k_i)}=\big[E_{\Psi_j(\gamma_{j})}^{\mathrm{U}((\sum_{i=2}^{j}k_i)-s)}: \mathcal{I}(\Psi_j(\gamma_{j-1}),\beta_j^{k_j})\big]_{\mathrm{U}((\sum_{i=2}^{j}k_i)-s)}.\end{equation}
Notice that
\begin{equation}\label{prod_gamp}\begin{aligned} & \big[E_{\Psi(\gamma)}^{\mathrm{U}((\sum_{i=2}^{j}k_i)-s)} : V'\big]_{\mathrm{U}((\sum_{i=2}^{j}k_i)-s)}\\ = & 
\sum_{\tiny\begin{array}{c} \gamma_2'=(\beta_2-\varepsilon)^{k_2-s}, \gamma_3'\in \mathbb{Z}^{k_2+k_3-s},\\ \dots,\gamma_t'=\Psi(\gamma)\ \text{are dominant of type A}\end{array}} \prod_{j=2}^{t}\big[E_{\gamma_{j}'}^{\mathrm{U}((\sum_{i=2}^{j}k_i)-s)}: \mathcal{I}(\gamma_{j-1}',\beta_j^{k_j})\big]_{\mathrm{U}((\sum_{i=2}^{j}k_i)-s)}.\end{aligned}\end{equation}
Comparing with \eqref{sum_prod_gam}, the \eqref{comp_psi} implies
\[\big[E_{\gamma}^{\mathrm{U}(k)} : V\big]_{\mathrm{U}(k)} \leq \big[E_{\Psi(\gamma)}^{\mathrm{U}((\sum_{i=2}^{j}k_i)-s)} : V'\big]_{\mathrm{U}((\sum_{i=2}^{j}k_i)-s)}.\]
On the other hand, 
let $\Phi_j: \mathbb{R}^{(\sum_{i=2}^jk_i)-s} \to \mathbb{R}^{\sum_{i=1}^jk_i}$ for any $j\geq 2$ by 
\[\begin{aligned} & \Phi_j\big((z_1,\dots,z_{k_2-s};y_1,\cdots,y_{\sum_{i=3}^jk_i})\big)\\ = &\ \big(\beta_1^{k_1}+(\delta_1,\dots,\delta_s,0^{k_1-s}); (z_1,\dots,z_{k_2-s})+\varepsilon^{k_2-s}; \beta_2^{s}+(-\delta_s,\dots,-\delta_1); y_1,\dots,y_{\sum_{i=3}^jk_i}).\end{aligned}\]
Then for any $\gamma_2',\dots,\gamma_t'$ such that the right side of \eqref{prod_gamp} is non-zero, $\Phi_j(\gamma_j')$ are dominant weight of type $A$ for $j\geq 2$, and $\Phi_2(\gamma_2')=\gamma_2$, $\Phi_t(\gamma_t')=\gamma_t$. Moreover, $\Psi_i(\Phi_i\gamma_i')=\gamma_i'$ for $i\geq 2$. By \eqref{comp_psi}, $\{\gamma_j:=\Phi_j(\gamma_j'), j\geq 2\}$ contributes to \eqref{sum_prod_gam}. Hence,
\[\big[E_{\gamma}^{\mathrm{U}(k)} : V\big]_{\mathrm{U}(k)} \geq \big[E_{\Psi(\gamma)}^{\mathrm{U}(\sum_{i=2}^{j}k_i-s)} : V'\big]_{\mathrm{U}(\sum_{i=2}^{j}k_i-s)}.\]
The lemma now follows immediately.
\end{proof}

\begin{lemma}\label{red_tsp}
\noindent (i)  Let $m_1,m_2\in \mathbb{N}_+$ and $m=m_1+m_2$. Let $\eta_1=(a_1,\dots,a_{m_1})$ (resp. $\eta_2=(b_1,\dots,b_{m_2})$) be a highest weight of a $\mathrm{U}(m_1)$-type (resp. $\mathrm{U}(m_2)$-type). Assume that $\eta$ is a highest weight of a $\mathrm{U}(m)$-type satisfying that there exist $h,p,q\in \mathbb{N}$ with $h+p+q\leq m_1$, such that  
\[\eta=(a_1,\dots,a_{h};v_1;a_{h+p+1},\dots,a_{h+p+q};v_2;v_3),\]
for some $v_1\in \mathbb{Z}^{p}$, $v_2\in \mathbb{Z}^{m_1-h-p-q}$ and $v_3\in \mathbb{Z}^{m_2}$.

For any $\varepsilon\in \mathbb{N}$ such that $a_{h+p}-\varepsilon\geq a_{h+p+q}$, set $\eta_1'=(a_{h+1},\dots,a_{h+p}; a_{h+p+q+1},\dots,a_{m_1})-(\varepsilon^{p};0^{m_1-h-p-q})$ and $\eta'=(v_1-\varepsilon^{p};v_2;v_3)$. Then  
\[\begin{aligned} & \big[E_{\eta}^{\mathrm{U}(m)} : \mathrm{Ind}_{\mathrm{U}(m_1)\times \mathrm{U}(m_2)}^{\mathrm{U}(m)}\big(E_{\eta_1}^{\mathrm{U}(m_1)}\boxtimes E_{\eta_2}^{\mathrm{U}(m_2)}\big)\big]_{\mathrm{U}(m)}\\ = &\ 
\big[E_{\eta'}^{\mathrm{U}(m-h-q)} : \mathrm{Ind}_{\mathrm{U}(m_1-h-q)\times \mathrm{U}(m_2)}^{\mathrm{U}(m-h-q)}\big(E_{\eta_1'}^{\mathrm{U}(m_1-h-q)}\boxtimes E_{\eta_2}^{\mathrm{U}(m_2)}\big)\big]_{\mathrm{U}(m-h-q)}.\end{aligned}\]
\noindent (ii) Let $m\in \mathbb{N}_+$, and let $\omega_1=(a_1,\dots,a_m)$, $\omega_2=(b_1,\dots,b_m)$ be two highest weights of $\mathrm{U}(m)$-types. Assume that $\omega$ is a highest weight of a $\mathrm{U}(m)$-type satisfies that there exist $h,p,q\in \mathbb{N}$ with $h+p+q\leq m$, such that  
\[\omega=(a_1,\dots,a_{h};v_1;a_{h+p+1},\dots,a_{h+p+q};v_2),\]
for some $v_1\in \mathbb{Z}^{p}$, $v_2\in \mathbb{Z}^{m-h-p-q}$. And $\omega_2$ satisfies $b_{m-x+1}=0$ for $1\leq x\leq h+q$.

For any $\varepsilon\in \mathbb{N}$ such that $a_{h+p}-\varepsilon\geq a_{h+p+q}$, set $\omega_1'=(a_{h+1},\dots,a_{h+p};a_{h+p+q+1},\dots,a_m)-(\varepsilon^p;0^{m-h-p-q})$, $\omega_2'=(b_1,\dots,b_{m-h-q})$, and $\omega'=(v_1;v_2)-(\varepsilon^p;0^{m-h-p-q})$. 
Then 
\[\begin{aligned} & \big[E_{\omega}^{\mathrm{U}(m)} :   E_{\omega_1}^{\mathrm{U}(m)}\otimes E_{\omega_2}^{\mathrm{U}(m)} \big]_{\mathrm{U}(m)}\\ = &\ 
\big[E_{\omega'}^{\mathrm{U}(m-h-q)} :  E_{\omega_1'}^{\mathrm{U}(m-h-q)}\otimes E_{\omega_2'}^{\mathrm{U}(m-h-q)} \big]_{\mathrm{U}(m-h-q)}.\end{aligned}\]  
\end{lemma}
\begin{proof}
By Theorem 9.2.3 of \cite{Goodman_Wallach:2009}, one can reduce (i) to (ii). And (ii) can be check directly by the Littlewood-Richardson rules. 
\end{proof}

Let's prove that $\pi$ contains a unique and multiplicity-free spin lowest $K$-type, which satisfies \eqref{slkt-cond}, by showing Theorem \ref{uni_m1_GLnH}. By the paragraph before Proposition \ref{FS-sc}, one can assume $\pi$ is a FS-scattered representation.

\begin{theorem}\label{uni_m1_GLnH} There is exactly one element $\tau$ in
\[
\Omega_0:=\{w(\Lambda-\rho_c)+\rho_n\ |\ w\in W(\mathfrak{k},\mathfrak{t})\ \text{s.t.}\ w(\Lambda-\rho_c)+\rho_n\ \text{is}\ \Delta^+(\mathfrak{k},\mathfrak{t})\text{-dominant}\}  \]
such that
\begin{equation}\label{cup>0}
\big[(Z^{\#}\boxtimes \mathbf{1})\otimes S(\mathfrak{u}\cap\mathfrak{p}):H^i(\mathfrak{u}\cap \mathfrak{k},E_{\tau})\big]_{L\cap K}>0
\end{equation}
for some $i\in \mathbb{N}$. When condition \eqref{cup>0} holds, then $i=0$ and the multiplicity \eqref{cup>0} is one.
\end{theorem}

\begin{proof} 
\noindent {\bf Step 1:} Notice that $H^0(\mathfrak{u}\cap \mathfrak{k},E_{\tau})=E_{\tau}$, it suffices to show
\[\big[(Z^{\#}\boxtimes \mathbf{1})\otimes S(\mathfrak{u}\cap \mathfrak{p}): \mathop{\oplus}\limits_{\tau\in \Omega_0}\mathop{\oplus}\limits_{i\in \mathbb{N}} H^i(\mathfrak{u}\cap \mathfrak{k},E_{\tau})\big]_{L\cap K}=\big[(Z^{\#}\boxtimes \mathbf{1})\otimes S(\mathfrak{u}\cap \mathfrak{p}): \mathop{\oplus}\limits_{\tau\in \Omega_0}E_{\tau}\big]_{L\cap K}=1.\]
Let us approximate the sets of  $L\cap K$-types in $(Z^{\#}\boxtimes \mathbf{1})\otimes S(\mathfrak{u}\cap \mathfrak{p})$ and $\mathop{\oplus}\limits_{\tau\in \Omega_0}\mathop{\oplus}\limits_{i\in \mathbb{N}} H^i(\mathfrak{u}\cap \mathfrak{k},E_{\tau})$: 
Define
\[\mathcal{A}=\text{the set of highest weights of }\ L\cap K\text{-types in} \ (Z^{\#}\boxtimes \mathbf{1})\otimes
(\mathcal{E}_1 \oplus
\mathcal{E}_2),\]
where $\mathcal{E}_1 \oplus
\mathcal{E}_2$ contains all the $L\cap K$-types of $S(\mathfrak{u}\cap\mathfrak{p})$ by the Lemma \ref{sup_glnh}. Hence, $\mathcal{A}$ contains all the $L\cap K$-types in $(Z^{\#}\boxtimes \mathbf{1})\otimes S(\mathfrak{u}\cap\mathfrak{p})$. 

Set 
\begin{equation}\label{cone}
\mathcal{N}=\{(x_1,\dots,x_k,y_1,\dots,y_r)\ |\ x_i\leq 0, 1\leq i\leq k; y_j\geq 0, 1\leq j\leq r\}.\end{equation}
Define
\[ \mathcal{B}= \{\tau+\xi \ |\ \tau\in \Omega_0, \xi\in \mathcal{N}\}. \] 
By \eqref{coh_lk}, all highest weights of $L\cap K$-types in $\mathop{\oplus}\limits_{i\in \mathbb{N}}H^i(\mathfrak{u}\cap \mathfrak{k},E_{\tau})$ are contained in $\{w_1(\tau+\rho_c)-\rho_c\ |\ w_1\in W^1\}$. Since $\rho_c$ and $\tau$ are both $\Delta^+(\mathfrak{k},\mathfrak{t})$ dominant, one has that for any $w_1\in W^1$,
\[w_1\rho_c-\rho_c\in \mathcal{N},\ \text{and}\ w_1\tau-\tau\in \mathcal{N}.\]
Therefore, $w_1(\tau+\rho_c)-\rho_c=\tau+(w_1\tau-\tau)+(w_1\rho_c-\rho_c)\in \tau+\mathcal{N}$. 
Hence, $\mathcal{B}$ contains 
 all the highest weights of $L\cap K$-types in $\mathop{\oplus}\limits_{\tau\in \Omega_0}\mathop{\oplus}\limits_{i\in \mathbb{N}} H^i(\mathfrak{u}\cap \mathfrak{k},E_{\tau})$. 

\noindent {\bf Step 2:} Let's give an approximations of the set $\mathcal{A}\cap \mathcal{B}$. 

Let

$$\zeta_{\pi}:=(\mathcal{L}_1;\dots;\mathcal{L}_t;\mathcal{T}_u),$$
where $\mathcal{L}_1,\dots,\mathcal{L}_t,\mathcal{T}_u$ are defined in Theorem \ref{thm-main}. Define a function
\[\phi:\mathbb{R}^n\to \mathbb{R}, (x_1,\dots,x_{k},y_1,\dots,y_{r})\mapsto \sum_{i=1}^{k} x_i-\sum_{i=1}^r y_i.\]
Let's show that $\phi(v)\geq \phi(\zeta_{\pi}), \forall\ v\in \mathcal{A}$, and $\phi(v)\leq \phi(\zeta_{\pi}), \forall\ v\in \mathcal{B}$.
 
For any $v\in \mathcal{A}$, by definition, assume $v$ is highest weight of a $L\cap K$-type in $(E_{\gamma}\boxtimes \mathbf{1})\otimes (E_{\chi_1}\boxtimes E_{\chi_2})$, where $E_{\gamma}$ is an $L\cap K$-type in $Z^{\#}$ and $E_{\chi_1}\boxtimes E_{\chi_2}$  is an $L\cap K$-type in $\mathcal{E}_1\oplus\mathcal{E}_2$. By Lemma \ref{high_lem}, $v$ is equal to $(\gamma;\chi_2)$ plus a weight in $E_{\chi_1}\boxtimes 1$.  Since all weights in $E_{\chi_1}$ have the same sum of the coordinates, one has
\[\phi(v)=\phi\big((\gamma;\chi_2)\big)+\phi\big((\chi_1;0^r)\big)=\phi\big((\gamma;0^{r})\big)+\phi\big((\chi_1;\chi_2)\big),\ \forall v\in \mathcal{A}.\]
By definition of $\mathcal{E}_1$ and $\mathcal{E}_2$, one has $\phi\big((\chi_1;\chi_2)\big)\geq 0$ with the equality happens if and only if $(E_{\chi_1}\boxtimes E_{\chi_2})$ comes from $\mathcal{E}_1$.

By (i) of Lemma \ref{kt_of_zs}, we know any $L\cap K$-type $\gamma$ of $(Z^{\#}\boxtimes \mathbf{1})$ is in the set
\[\big\{
(\mathcal{L}_1,\dots,\mathcal{L}_t)+d\ |\
 d\in \mathbb{N}\langle\Delta(\mathfrak{u}_0)\rangle\big\}.\]
Hence, $\phi\big((\gamma;0)\big)=\phi(\zeta_{\pi})$. Combining with above discussion, one has
\begin{equation}\label{phi_a}
\phi(v)\geq \phi(\zeta_{\pi}),\ \forall v\in \mathcal{A}.
\end{equation}

For any $v=\tau+\xi \in \mathcal{B}$ with $\tau=w(\Lambda-\rho_c)+\rho_n\in \Omega_0$, notice that
\[\phi(\tau+\xi) \leq \phi(\tau)=\phi\big(w(\Lambda-\rho_c)\big)+\phi(\rho_n),\]
where the first equality happens if and only if $\xi=0$.

Let
\[S_0:\mathbb{R}^n\to \mathbb{R}, \ (z_1,\dots,z_{n})\mapsto \sum_{i=1}^{n}z_i.\]
Since $\Lambda-\rho_c$ is $\Delta^+(\mathfrak{k},\mathfrak{t})$ dominant, one has $\phi(w(\Lambda-\rho_c))\leq S_0(\Lambda-\rho_c)$, and the equality happens if and only if
\[w(\Lambda-\rho_c)\in -\mathcal{N}.\] 
A direct calculation shows $S_0(\Lambda-\rho_c)+\phi(\rho_n)=\phi(\zeta_{\pi})$,
so one has
\begin{equation}\label{phi_b_R}
\phi(\tau+\xi) \leq \phi(\zeta_{\pi}),\ \forall \tau+\xi \in \mathcal{B}.
\end{equation}

Combining \eqref{phi_a} and \eqref{phi_b_R}, we know that any element $v\in  \mathcal{A}\cap \mathcal{B}$ must satisfy $\phi(v)=\phi(\zeta_{\pi})$. By the condition on when equalities in \eqref{phi_a} and \eqref{phi_b_R} hold, such $v$ is a highest weights of an $L\cap K$-type in $(Z^{\#}\boxtimes \mathbf{1})\otimes \mathcal{E}_1$, and is equal to some $w(\Lambda-\rho_c)+\rho_n\in \Omega_0$ such that $w(\Lambda-\rho_c)\in -\mathcal{N}$. 

By Lemma \ref{sup_glnh}, all the $L\cap K$-types in $\mathcal{E}_1$ has multiplicity one in $S(\mathfrak{u}\cap \mathfrak{p})$. Therefore, 
\[\big[(Z^{\#}\boxtimes \mathbf{1})\otimes S(\mathfrak{u}\cap \mathfrak{p}): \mathop{\oplus}\limits_{\tau\in \Omega_0}\mathop{\oplus}\limits_{i\in \mathbb{N}} H^i(\mathfrak{u}\cap \mathfrak{k},E_{\tau})\big]_{L\cap K}=\big[(Z^{\#}\boxtimes \mathbf{1})\otimes \mathcal{E}_1: \mathop{\oplus}\limits_{\tau\in \Omega}  E_{\tau}\big]_{L\cap K},\]
where $\Omega$ is defined by
\[ \Omega= \{w(\Lambda-\rho_c)+\rho_n\ |\ w(\Lambda-\rho_c)+\rho_n\in \Omega_0,\ w(\Lambda-\rho_c)\in -\mathcal{N}\}. \]

By the following Proposition \ref{prop_ktype}, take $V=Z^{\#}$, i.e. let $\beta_i=b_i+k_i-1$, one has 
\[\big[(Z^{\#}\boxtimes \mathbf{1})\otimes \mathcal{E}_1: \mathop{\oplus}\limits_{\tau\in \Omega}  E_{\tau}\big]_{L\cap K}=1.\]
Now the theorem follows immediately. 
\end{proof}

To finish the proof above, we prove the following Proposition \ref{prop_ktype}, whose statement is  independent of the theorem above.  

In the following proof, let ``$>$" denote the lexicographical order over $\mathbb{R}^m$ for any $m\in \mathbb{N}_+$: $(x_1,\dots,x_m)>(y_1,\dots,y_m)$ if and only if there exists $1\leq i\leq m$  such that $x_j=y_j$ for all $j<i$, and $x_i>y_i$.

Let us use ``$\succ$" to denote the relation : $(x_1,\dots,x_m) \succ (y_1,\dots,y_m)$ if and only if $x_i> y_i$ for all $1\leq i\leq m$.

\begin{proposition}\label{prop_ktype}
Given the datum ``$\zeta=\big(\beta_1^{k_1};\dots;\beta_t^{k_t};0^r\big)$ with $r\in \mathbb{N}_+$, $k_i\in \mathbb{N}_+$ and $\beta_i\in \mathbb{N}$ for $1\leq i\leq t$", which are equivalent to the datum of chains:
\[\mathcal{A}_i=[\beta_i+k_i,\beta_i-k_i+2],\ 1\leq i\leq t;\  \mathcal{U}=[2r-1,1].\]
Let $k=\sum\limits_{i=1}^t k_i\ \text{and}\ n=k+r$.
Assume that $\Lambda=\mathop{\cup}\limits_i^t \mathcal{A}_i\cup \mathcal{U}$ is $u$-interlaced as Definition \ref{chain}. 

Let $V=\mathrm{Ind}_{\prod_{i=1}^t\mathrm{GL}(k_i,\mathbb{C})}^{\mathrm{GL}(k,\mathbb{C})}\big(
\mathop{\boxtimes}\limits_{i=1}^t\mathbb{C}_{\beta_i}\big)$ be the $\mathrm{GL}(k,\mathbb{C})$-module as Lemma \ref{kt_of_zs}. Define
\[\mathcal{H}= \  \text{All highest weights of $\mathrm{U}(k)\times \mathrm{Sp}(r)$-types in} \ (V\boxtimes \mathbf{1})\otimes \mathcal{E}_1,\] 
\[\Omega=\left\{w(\Lambda-\rho_c)+\rho_n\ \big\vert\ w\in W(C_n)\ s.t.\ \Big\{\begin{array}{l}w(\Lambda-\rho_c)+\rho_n\ \text{is dominant of type C}\\ w(\Lambda-\rho_c)\in -\mathcal{N}\end{array}\right\}, \]
where $\mathcal{E}_1$ is defined in Lemma \ref{kt_of_zs}, $W(C_n)$ is the Weyl group of type $C_n$, and $\mathcal{N}$ is defined in \eqref{cone}. 

Then $\mathcal{H}\cap \Omega$ contains exactly one element, denoted by $\tau$, and $E_{\tau}^{\mathrm{U}(k)\times \mathrm{Sp}(r)}$ shows up with multiplicity one in $(V\boxtimes \mathbf{1})\otimes \mathcal{E}_1$.
\end{proposition}

\begin{proof} 
By definition of $\mathcal{E}_1$, one can see that any element in $\mathcal{H}$ is of the form
\begin{equation}\label{C=gam+h}
(\gamma;0^r)+(h_1,\dots,h_k;a_1,\dots,a_x,0^{r-x}),
\end{equation} 
where $\gamma$ is the highest weight of a $\mathrm{U}(k)$-type of $V$, and $(h_1,\dots,h_k)$ is a weight of $E^{\mathrm{U}(k)}_{(a_1,\dots,a_x,0,\dots,0)}$ for some $a_1\geq \dots\geq a_x> 0$, $x\leq \min\{k,r\}$. Therefore, there are at least $x$ non-zero entries among $h_1,\dots,h_k$, and 
\begin{equation}\label{h<=a}  
h_j\geq 0,\ \forall\ 1\leq j\leq k;\ \text{and}\  \sum_{1\leq j\leq i}h_{\psi(j)}\leq \sum_{1\leq j\leq i}a_j, \forall\ \psi\in W(A_{k-1}),  \forall\ 1\leq i\leq k,\end{equation}
where $W(A_{k-1})$ is the symmetric group of $\{1,\dots,k\}$, and set $a_i=0$ when $x< i\leq k$.

Moreover, Lemma \ref{kt_of_zs} gives that 
\begin{equation}\label{gam=b+d}
\gamma=(\beta_1^{k_1};\dots;\beta_t^{k_t})+d,\ 
 d\in \mathbb{N}\langle\Delta(\mathfrak{u}_0)\rangle. \end{equation}
 
Define $\mathcal{Q}_1: \mathbb{R}^n\to \mathbb{R}^k$ by  \[\mathcal{Q}_1\big((u;y)\big)=u, \forall u\in \mathbb{R}^{k}, \forall y\in \mathbb{R}^{r}.\] 

Define $\mathcal{Q}_2: \mathbb{R}^{n-r+\min\{k,x\}}\times 0^{r-\min\{k,x\}} (\subset \mathbb{R}^n) \to \mathbb{R}^k$ by 
\[\mathcal{Q}_2\big((u;(y_1,\dots,y_x,0^{r-x}))\big)=(y_1,\dots,y_x,0^{k-x}), \forall u\in \mathbb{R}^{k}, \forall 0\leq x\leq \min\{k,r\}.\]

By the property of $\mathcal{E}_1$, for any $v=(u_1,\dots,u_k;a_1,\dots,a_x,0^{r-x})\in \mathbb{Z}^n$ with $u_1\geq \dots\geq u_k$, $a_1\geq \dots\geq a_x\geq 0$ and $x\leq \min\{k,r\}$, the multiplicity of $E_{v}^{\mathrm{U}(k)\times \mathrm{Sp}(r)}$ in $(V\boxtimes \mathbf{1})\otimes \mathcal{E}_1 $ is equal to
\begin{equation}\label{mul_prod} \sum_{E_{\gamma}^{\mathrm{U}(k)}}\big[E_{\gamma}^{\mathrm{U}(k)}:V|_{\mathrm{U}(k)}\big]\cdot \big[E_{\gamma}^{\mathrm{U}(k)}\otimes E_{\mathcal{Q}_2(v)}^{\mathrm{U}(k)} : E_{\mathcal{Q}_1(v)}^{\mathrm{U}(k)} \big]. \end{equation}  
The proof of the theorem are divided into the following three cases.

\textbf{Case (I)}. This is the basic case and the proof gives the idea of the case (II) and (III). Assume that the chains are interlaced as follows:
\[\begin{array}{c} (\mathcal{A}_1)\ (\mathcal{A}_2)\ \dots (\mathcal{A}_t)\\
(   \qquad \qquad
 \mathcal{U}\qquad \qquad)\end{array},\]
i.e. $2r-1> \beta_1+k_1\geq \beta_1-k_1+2> \dots> \beta_t+k_t\geq \beta_t-k_t+2>1$.  
Then 
\[\begin{aligned}\Lambda= &\ ([2r-1,\beta_1+k_1+1];\{\beta_1+k_1,\beta_1-k_1+2\};\\ & [\beta_1-k_1+3,\beta_2+k_2+1];\{\beta_2+k_2,\beta_2-k_2+2\};\dots;
[\beta_t+k_t,\beta_t-k_t+2];\{\beta_t-k_t+1,1\}).\end{aligned}\]
Recall that $\rho_c=(n,n-1,\dots,1)$, so
$$
\Lambda-\rho_c=(\{q,p_1\};p_1^{2k_1-1};\{p_1,p_2\};p_2^{2k_2-1};\{p_2,p_3\};
\dots;p_t^{2k_t-1};\{p_t,0\}),$$
for $q=r-1-k$, $p_i=\frac{\beta_i-k_i}{2}-\sum\limits_{j=i+1}^{t}k_j$. 
To prove the theorem in this case, it suffices to prove that for any $w(\Lambda-\rho_c)+\rho_n\in \mathcal{H}\cap \Omega$, one has
\begin{equation}\label{i_uniq_wl-r}
w(\Lambda-\rho_c)=(p_1^{k_1};\dots,p_t^{k_t};(-p_t)^{k_t};\dots;(-p_1)^{k_1};\{-q,0\}),\end{equation}
and $E_{w(\Lambda-\rho_c)+\rho_n}^{\mathrm{U}(k)\times \mathrm{Sp}(r)}$ shows up in $(V\boxtimes \mathbf{1})\otimes \mathcal{E}_1$ with multiplicity one.

To prove \eqref{i_uniq_wl-r}, for any $v=(v_1,\dots,v_t,v_t',\dots,v_1',v_0)\in \mathbb{R}^n$ with $v_i, v_i'\in \mathbb{R}^{k_i}$ and ${v_0}\in \mathbb{R}^{q+1}$, let 
\[S_0(v)=||{v_0}||_1,\ S_i(v)=\sum {v_i},\ S_i'(v)=\sum v_i',\] 
where $||\cdot||_1$ (resp. $\sum$ ) means summing up the absolute of the coordinates (resp. summing up the coordinates) of the vector.    

Let's show that any $w(\Lambda-\rho_c)+\rho_n\in \mathcal{H}\cap \Omega$ must satisfy \eqref{i_uniq_wl-r}, it suffices to show that: Given any
$v\in \mathbb{R}^n$, consider the vector 
\begin{equation}\label{S0t}\big(S_0(v),S_t(v-\zeta)-S_t'(v-\zeta),\dots,S_1(v-\zeta)-S_1'(v-\zeta)\big)\in \mathbb{R}^{t+1}
\end{equation}
\begin{itemize}
\item[(i)] If $v\in \mathcal{H}$, one has \eqref{S0t} $\leq 0^{t+1}$ under the lexicographical order.
\item[(ii)] If
$v=w(\Lambda-\rho_c)+\rho_n\in \Omega$, one has \eqref{S0t} $\geq 0^{t+1}$ under the lexicographical order, and the equality happens if and only if \eqref{i_uniq_wl-r} holds.
\end{itemize}

For (i): Given any $v\in \mathcal{H}$. By \eqref{C=gam+h}, $v$ contains at most $k$ positive elements in the last $r$ entries, so the last $r-k=q+1$ entries of $v$ are all zero. Hence $S_0(v)=0$.
 
By \eqref{C=gam+h} and \eqref{gam=b+d}, one has for any $1\leq j\leq t$,
\[S_j(v-\zeta)=S_j\big((d;0^r)\big)+S_j\big((h_1,\dots,h_k;0^r)\big),\  S_j'(v-\zeta)=S_j'\big((0^k;a_1,\dots,a_k,0^{r-k})\big).\]
For any $1\leq i\leq t$, since $d\in \mathbb{N}\langle\Delta(\mathfrak{u}_0)\rangle$, $\sum_{j=i}^tS_j\big((d;0^r)\big)\leq 0$;  by \eqref{h<=a}, 
\[\sum_{j=i}^tS_j\big((h_1,\dots,h_k;0^r)\big)\leq \sum_{j=i}^tS_j'\big((0^k;a_1,\dots,a_k,0^{r-k})\big).\] 
Hence,
$$
\sum_{j=i}^t S_j(v-\zeta)\leq \sum_{j=i}^t S_j'(v-\zeta),\ \forall 1\leq i\leq t,
$$
which implies (i) immediately.

For (ii): Given any
$v=w(\Lambda-\rho_c)+\rho_n\in \Omega$. Since $\rho_n=(n-1,n-2,\dots,1,0)$, one has $S_0(v)\geq 0$ with equality holds if and only if 
\begin{equation}\label{w_{q+1}}w(\Lambda-\rho_c)=(\underbrace{\dots}_{k};\underbrace{\dots}_{k};\{-q,0\}).\end{equation}

To show (ii), one can assume $S_0(v)=0$, and so \eqref{w_{q+1}} holds. We claim that $S_t(v-\zeta)\geq S_t'(v-\zeta)$.

By definition of $\Omega$ and $\mathcal{N}$, one knows that the $(k-k_t+1)^{st}$ through $k^{th}$ entries of $w(\Lambda-\rho_c)$ are all non-negative. Combining with \eqref{w_{q+1}}, one has  
\begin{equation}\label{s1>}
\begin{aligned}S_t\big(w(\Lambda-\rho_c)+\rho_n-\zeta\big)= &\ S_t\big(w(\Lambda-\rho_c)\big)+S_t(\rho_n-\zeta)\geq p_tk_t+S_t(\rho_n-\zeta),\end{aligned}
\end{equation}
and the equality happens if and only if
\begin{equation}\label{Bt}
w(\Lambda-\rho_c)=(\underbrace{\dots}_{k-k_t},p_t^{k_t};\underbrace{\dots}_{r-q-1};\{-q,0\}).
\end{equation}

Again by definition of $\Omega$ and $\mathcal{N}$, the last $r$ entries of $w(\Lambda-\rho_c)$ are non-positive. Combining with \eqref{w_{q+1}}, one has
\begin{equation}\label{s2<}
S_t'(w(\Lambda-\rho_c)+\rho_n-\zeta)=S_t'\big(w(\Lambda-\rho_c)\big)+S_t'(\rho_n-\zeta)\leq -p_tk_t+S_t'(\rho_n-\zeta),
\end{equation}
and the equality happens if and only if
\begin{equation}\label{Bt'}
w(\Lambda-\rho_c)=(\underbrace{\dots}_{k};(-p_t)^{k_t};\underbrace{\dots}_{r-q-k_t-1};\{-q,0\}).\end{equation}

Notice that $S_t(\rho_n)=S_t'(\rho_n)+k_t^2$ and $S_t(\zeta)=S_t'(\zeta)+\beta_tk_t$. Combining with $\beta_t=2p_t+k_t$, one has
\[p_tk_t+S_t(\rho_n-\zeta)=-p_tk_t+S_t'(\rho_n-\zeta).\] 
Hence, \eqref{s1>} and \eqref{s2<} imply $S_t(v-\zeta)-S_t'(v-\zeta)\geq 0$, and the equality happens if and only if \eqref{Bt} and \eqref{Bt'} hold.

To show (ii), one can assume $S_t(v)=S_t'(v)$ and so \eqref{Bt} and \eqref{Bt'} hold. By repeating the arguments $t-1$ times to $S_{t-1}(v)-S_{t-1}'(v)$, $\dots$, $S_1(v)-S_1'(v)$, one can get (ii), and so \eqref{i_uniq_wl-r}. 

It remains to prove that the $E_{\tau}^{\mathrm{U}(k)\times \mathrm{Sp}(r)}$ with $\tau=w(\Lambda-\rho_c)+\rho_n$ satisfying \eqref{i_uniq_wl-r}, shows up in $(V\boxtimes \mathbf{1})\otimes \mathcal{E}_1$ with multiplicity one. By \eqref{i_uniq_wl-r}, one has
\begin{equation}\label{i_tau}
\begin{array}{rl}\tau = & (\zeta_{V};0^r)+(\{r-\frac{\beta_1-k_1+2}{2},r-\frac{\beta_1+k_1}{2}\};\dots;\{r-\frac{\beta_t-k_t+2}{2},r-\frac{\beta_t+k_t}{2}\}; \vspace{0.8em} \\ &\{r-\frac{\beta_t-k_t+2}{2},r-\frac{\beta_t+k_t}{2}\};\dots;\{r-\frac{\beta_1-k_1+2}{2},r-\frac{\beta_1+k_1}{2}\};0^{q+1}).\end{array}\end{equation}
Assume that 
$\tau=(\gamma;0^r)+(h_1,\dots,h_k;a_1,\dots,a_x,0^{r-x})$ as \eqref{C=gam+h}, then 
\[\begin{array}{c}(a_1,\dots,a_x,0^{r-x})=(\{r-\frac{\beta_t-k_t+2}{2},r-\frac{\beta_t+k_t}{2}\};\dots;\{r-\frac{\beta_1-k_1+2}{2},r-\frac{\beta_1+k_1}{2}\};0^{q+1})\end{array}.\]
Combining with $\gamma$ satisfying \eqref{gam=b+d}, the \eqref{i_tau} implies that
\begin{equation}\label{hbd}(h_1,\dots,h_k)=(\{r-\frac{\beta_1-k_1+2}{2},r-\frac{\beta_1+k_1}{2}\};\dots;\{r-\frac{\beta_t-k_t+2}{2},r-\frac{\beta_t+k_t}{2}\})-d,\end{equation}
where $d\in \mathbb{N}\langle\Delta(\mathfrak{u}_0)\rangle$ as \eqref{gam=b+d}.
Write $d=(d_1;\dots;d_t)$ with $d_i\in \mathbb{Z}^{k_i}$, then one has $d_t\preceq 0^{k_t}$.  Let us show $d=0^k$: Since $h_1,\dots,h_k$ satisfy \eqref{h<=a}, one has \[\sum\limits_{j=1}^{k_t}a_j-\sum\limits_{j=k-k_t+1}^{k}h_j \geq 0,\]
Combining with \eqref{hbd}, one has $\sum d_t\geq 0$, where $\sum d_t$ means summing up the coordinates of $d_t$. Therefore, one gets $d_t=0^{k_t}$. By repeating the above arguments $t-1$ times to $d_{t-1},\dots,d_1$, one gets $d=0$. Hence, $\gamma=\zeta_V$. 

Notice that $E_{\zeta_V}^{\mathrm{U}(k)}$ has multiplicity one in $V$. Since the weight $\mathcal{Q}_1(\tau)-\zeta_V=\big(\{r-\frac{\beta_1-k_1+2}{2},r-\frac{\beta_1+k_1}{2}\};\dots;\{r-\frac{\beta_t-k_t+2}{2},r-\frac{\beta_t+k_t}{2}\}\big)$ has multiplicity one in $E_{\mathcal{Q}_2(\tau)}^{\mathrm{U}(k)}=E_{(a_1,\dots,a_x,0^{k-x})}^{\mathrm{U}(k)}$, one has 
\[\big[E_{\zeta_V}^{\mathrm{U}(k)}\otimes E_{\mathcal{Q}_2(\tau)}^{\mathrm{U}(k)} : E_{\mathcal{Q}_1(\tau)}^{\mathrm{U}(k)} \big]\leq 1.\] 
Moreover, the equality holds by Littlewood-Richarson rules, see also Lemma 3.7 of \cite{Dong_Wong_1:2022}. Hence, \eqref{mul_prod} implies that the multiplicity of $E_{\tau}^{\mathrm{U}(k)\times \mathrm{Sp}(r)}$ in $(V\boxtimes \mathbf{1})\otimes \mathcal{E}_1$ is one. 

This finishes the proof of case (I).

\textbf{Case (II)}. Assume that the chains are as follows:
\[\begin{array}{rl} (\quad \mathcal{A}_1\quad )  &(\mathcal{A}_2) \  \dots \  ( \mathcal{A}_t)\\
(\qquad & \qquad
 \mathcal{U}  \qquad  \qquad \quad).\end{array}\] 
i.e. $\beta_1+k_1 > 2r-1> \beta_1-k_1+2> \dots> \beta_t+k_t\geq \beta_t-k_t+2>1$.  

As case (I), one has   
\[\Lambda-\rho_c=(\{q,p_1\};p_1^{2m};\{p_1,p_2\};p_2^{2k_2-1};\{p_2,p_3\};\dots;p_t^{2k_t-1};\{p_t,0\}),\]
for $q=\beta_1+k_1-n$, $p_i=\frac{\beta_i-k_i}{2}-\sum\limits_{j=i+1}^{t}k_j$, $m=r-1-\frac{\beta_1-k_1}{2}$.  

To prove the case (II), it suffices to show that for any $w(\Lambda-\rho_c)+\rho_n\in \mathcal{H}\cap \Omega$, one has
\begin{equation}\label{wl-rII}
w(\Lambda-\rho_c)= 
(p_1^{m};\{p_1,q\};p_2^{k_2};\dots;p_t^{k_t};(-p_t)^{k_t};\cdots;(-p_2)^{k_2};(-p_1)^{m};\{-p_1,0\}),\end{equation}
And $E_{w(\Lambda-\rho_c)+\rho_n}^{\mathrm{U}(k)\times \mathrm{Sp}(r)}$ has multiplicity one in $(V\boxtimes \mathbf{1})\otimes \mathcal{E}_1$.

Similar to case (I), for any 
\[v=({v_1},{v_+},{v_2},\dots,{v_t},v_t',\dots,v_1',v_0)\in \mathbb{R}^n\] 
with ${v_+}\in \mathbb{R}^{q-p_1+1}$, $v_1,v_1'\in \mathbb{R}^{m}$, $v_i, v_i'\in \mathbb{R}^{k_i}$ for $i\geq 2$, and ${v_0}\in \mathbb{R}^{p_1+1}$, let 
\[S_+(v)=\min \{{v_+}\},  \ S_0(v)=||{v_0}||_1,\  S_i(v)=\sum v_i,\  S_i'(v)=\sum v_i',\] 
where $\min$ means taking the minimal of the coordinates of the vector.  

By \eqref{C=gam+h} and \eqref{gam=b+d}, for any $v\in \mathcal{H}$, the first $k_1$ entries of the elements in $v-\zeta$ are non-negative. Hence,  
\begin{equation}\label{s+>0}
S_+(v-\zeta)\geq 0,\ \forall v\in \mathcal{H}.
\end{equation}

On the other hand,  for $w(\Lambda-\rho_c)+\rho_n\in \Omega$, one has 
\[w(\Lambda-\rho_c)+\rho_n-\zeta=w(\Lambda-\rho_c)+(\underbrace{\dots}_{m};\{-p_1,-q\};\underbrace{\dots}_{n-k_1}). \]
Therefore, $S_+\big(w(\Lambda-\rho_c)+\rho_n-\zeta\big)\geq 0$ if and only if 
\begin{equation}\label{wl-r_c-k_1}w(\Lambda-\rho_c)=(\underbrace{\dots}_{m};\{p_1,q\};\underbrace{\dots}_{n-k_1}).\end{equation} Otherwise, if $w(\Lambda-\rho_c)$ doesn't satisfy \eqref{wl-r_c-k_1}, then the $(m+1)^{st}$ through $k_1^{st}$ entries of $w(\Lambda-\rho_c)+\rho_n-\zeta$ will contain negative entries. Combining with \eqref{s+>0}, one has that for any $w(\Lambda-\rho_c)+\rho_n\in \mathcal{H}\cap\Omega$, \eqref{wl-r_c-k_1} holds.

Apply the similar argument to case (I) to $S_0(v-\zeta)$, $S_t(v-\zeta)-S_t'(v-\zeta)$, $\dots$, $S_1(v-\zeta)-S_1'(v-\zeta)$, for $v\in \mathcal{H}\cap \Omega$, one can get \eqref{wl-rII}.

Similar to the case (I), one can show that the multiplicity of $E_{w(\Lambda-\rho_c)+\rho_n}^{\mathrm{U}(k)\times \mathrm{Sp}(r)}$ in $(V\boxtimes \mathbf{1})\otimes \mathcal{E}_1$ is one. This finishes the proof of case (II).
  
\textbf{Case (III)}. In general, let's prove by induction on $t$. Generally, the chains are interlaced as follows:
\[\text{sub-case (a).}\ \begin{aligned}&(\quad \mathcal{A}_{1} \quad) \dots   \\  & \quad (\quad \mathcal{A}_{2} \quad)     \dots ,\end{aligned}\qquad \text{or} \qquad 
\text{sub-case (b).}\ \begin{aligned}&(\quad \mathcal{A}_{1} \quad )\dots   \\  & \quad (\mathcal{A}_{2})      \dots ,\end{aligned}\]
i.e. $\beta_1+k_1 >\beta_2+k_2>\beta_1-k_1+2>\beta_2-k_2+2$ in sub-case (a), or $\beta_1+k_1 >\beta_2+k_2\geq \beta_2-k_2+2>\beta_1-k_1+2$ in sub-case (b).

We will only prove the sub-case (a) in detail, omitting the sub-case (b) which can be proved similarly.

Assume the chains are as sub-case (a). Then 
\[\Lambda-\rho_c =(\{q,p_1\};  p_1^{2m}; \dots ),\]
where $q=\beta_1+k_1-n$, $p_1=\frac{\beta_1+\beta_2+k_1+k_2+1}{2}-n$ and 
 $m=\frac{\beta_2+k_2-\beta_1+k_1-1}{2}$. Note that $q-p_1+m+1=k_1$.

Assume that $\tau=w(\Lambda-\rho_c)+\rho_n\in \mathcal{H}\cap \Omega$, let's show 
\begin{equation}\label{w_uniq} \begin{aligned}w(\Lambda-\rho_c)=
&(p_1^{m};\{p_1,q\};\underbrace{\dots}_{k_2-m}; p_1^{m};\underbrace{\dots}_{n-k_1-k_2}).\end{aligned}\end{equation}

\textbf{Step 1}. Since $\tau\in \mathcal{H}$, by \eqref{C=gam+h} and \eqref{gam=b+d}, the first $k_1$ entries of
$\tau-\zeta$ are non-negative. For similar reason to case (II), by $w(\Lambda-\rho_c)+\rho_n\in \mathcal{H}$, one has
\begin{equation}\label{1wl-r}
w(\Lambda-\rho_c)=
 (\underbrace{\dots}_{m};\{p_1,q\};\underbrace{\dots}_{k_2-m};\underbrace{\dots}_{m};\underbrace{\dots}_{n-k_1-k_2}).\end{equation}

\textbf{Step 2}. For $v=(v_1;\underbrace{\dots}_{k_1-m};\underbrace{\dots}_{k_2-m}
v_1';
\underbrace{\dots}_{x-k_1-k_2})\in \mathbb{R}^x$ ($x=k$ or $n$) with $v_1 \in \mathbb{R}^{m}$ and $v_1'=(y_1,\dots,y_m)\in \mathbb{R}^{m}$, define 
\[\mathcal{P}_1 : \mathbb{R}^x\to \mathbb{R}^m,\ \mathcal{P}_1(v)=v_1; \quad \mathcal{P}_1': \mathbb{R}^x\to \mathbb{R}^m,\ \mathcal{P}_1'(v)=(y_m,\dots,y_1).\] 
Be careful of the definition of $\mathcal{P}_1'$, which sends $v$ to the reverse order of $v_1'$.

Notice that \eqref{1wl-r} implies 
\begin{equation}\label{w+<}
\mathcal{P}_1\big(w(\Lambda-\rho_c)+\rho_n-\zeta\big)\preceq   p_1^m+\mathcal{P}_1(\rho_n-\zeta)
\end{equation}
and the equality happens if and only if
\begin{equation}\label{w+=}w(\Lambda-\rho_c)=(p_1^{m};\{p_1,q\};
\underbrace{\dots}_{n-k_1}).\end{equation}

Moreover, 
\begin{equation}\label{w->}
\mathcal{P}_1'(w(\Lambda-\rho_c)+\rho_n-\zeta)\preceq p_1^m+\mathcal{P}_1'(\rho_n-\zeta)\end{equation}
and the equality happens if and only if
\begin{equation}\label{w-=}w(\Lambda-\rho_c)=(\underbrace{\dots}_{m},\{p_1,q\};\underbrace{\dots}_{k_2-m}; p_1^{m};\underbrace{\dots}_{n-k_1-k_2}).\end{equation}

By a direct computation, the right side of \eqref{w+<} and \eqref{w->} are given by
\[ p_1^m+\mathcal{P}_1(\rho_n-\zeta)=\{m,1\}  \quad \text{and}\quad p_1^m+\mathcal{P}_1'(\rho_n-\zeta)=\{-m,-1\}. \] 
Hence, the right side of \eqref{w+<} is equal to $(-1)$ multiple of that of \eqref{w->}, and $\mathcal{P}_1(\tau-\zeta)\preceq -\mathcal{P}_1'(\tau-\zeta)$. 
We will use Lemma \ref{kt_of_zs} to show that $\mathcal{P}_1(\tau-\zeta)\succeq -\mathcal{P}_1'(\tau-\zeta)$, so the equality happens, and both \eqref{w+<} and \eqref{w->} must be achieved, and so \eqref{w+=} and \eqref{w-=}, which imply \eqref{1wl-r}.

Let $\gamma$ be the component of $\tau=w(\Lambda-\rho_c)+\rho_n$ in \eqref{C=gam+h}.   

Apply (ii) of Lemma \ref{kt_of_zs}:
Notice that \eqref{w->} implies that $\mathcal{P}_1'(\tau-\zeta)\prec 0^{m}$. Combining with \eqref{C=gam+h}, one has $\mathcal{P}_1'(\gamma-\zeta_V)\prec 0^m$, which implies the conditions in (ii) of Lemma \ref{kt_of_zs} for $s=m$.

By (ii) of Lemma \ref{kt_of_zs}, we get that $\mathcal{P}_1(\gamma-\zeta_V)\succeq -\mathcal{P}_1'(\gamma-\zeta_V) \succ 0^m$.  Combining with $\mathcal{P}_1'(\tau-\zeta)\prec 0^m$ and \eqref{C=gam+h}, one get $\mathcal{P}_1(\tau-\zeta)\succeq -\mathcal{P}_1'(\tau-\zeta) \succ 0^m$. Therefore, $\mathcal{P}_1(\tau-\zeta)= -\mathcal{P}_1'(\tau-\zeta)$, and so both \eqref{w+<} and \eqref{w->}.

So one knows that any $\tau \in \mathcal{H}\cap \Omega$ is of the form
\begin{equation}\label{vstep1cd}
(p_1^{m};\{p_1,q\};\underbrace{\dots}_{k_2-m}; p_1^{m};\underbrace{\dots}_{n-k_1-k_2})+\rho_n.\end{equation}

Now one can use induction on $t$ to prove the case (III): 
For any $v=(v_1;v_0;v_2; v_1';v_3)\in \mathbb{R}^x$, where $x=k$ or $n$, $v_1,v_1'\in \mathbb{R}^m$, and \[v_0\in \mathbb{R}^{q-p_1+1},  v_2\in \mathbb{R}^{k_2-m}, v_3\in \mathbb{R}^{x-k_1-k_2}, \]
define $\Psi: \mathbb{R}^x\to \mathbb{R}^{x-k_1-m}$ by $\Psi(v)=(v_2;v_3)- \big(m^{k_2-m};0^{x-k_1-k_2}\big)$.

By definition of $\Omega$, it is not hard to check that: for any $v\in \mathbb{Z}^n$ of the form \eqref{vstep1cd}, $v\in \Omega$ only if $\Psi(v)$ is in $\Omega'$, where $\Omega'$ is the ``$\Omega$'' defined using  $\Lambda'=\mathcal{A}'\cup(\mathop{\cup}\limits_{i\geq 3}\mathcal{A}_i\cup\mathcal{U})$ instead of $\Lambda$, where $\mathcal{A}'=[\beta_1-k_1+1,\beta_2-k_2+2]$.

By \eqref{vstep1cd}, any $\tau \in \mathcal{H}\cap \Omega$ is of the form
\begin{equation}\label{v=zeta+d}
v=\zeta+(\{m,1\};0^{q-p_1+1};\underbrace{\dots}_{k_2-m};\{-1,-m\} ;\underbrace{\dots}_{n-k_1-k_2}).\end{equation}
Moreover, by the discussion before \eqref{vstep1cd}, the component $\gamma$ of $v$ in \eqref{C=gam+h} is of the form
\begin{equation}\label{gam1}
\gamma=\zeta+(\{m,1\};0^{q-p_1+1};\underbrace{\dots}_{k_2-m};\{-1,-m\} ;\underbrace{\dots}_{k-k_1-k_2}).\end{equation}

Combining with \eqref{mul_prod}, the multiplicity of $E_{\tau}^{\mathrm{U}(k)\times \mathrm{Sp}(r)}$ in $(V\boxtimes \mathbf{1})\otimes \mathcal{E}_1$ is 

\[ \sum_{E_{\gamma}^{\mathrm{U}(k)}\ \text{satisfies \eqref{gam1}}} \big[E_{\gamma}^{\mathrm{U}(k)}:V|_{\mathrm{U}(k)}\big]\cdot \big[E_{\gamma}^{\mathrm{U}(k)}\otimes E_{\mathcal{Q}_2(\tau)}^{\mathrm{U}(k)} : E_{\mathcal{Q}_1(\tau)}^{\mathrm{U}(k)} \big]  \]

Now by (iii) of Lemma \ref{kt_of_zs} for $\epsilon=m$, one has
\[\big[E_{\gamma}^{\mathrm{U}(k)}:V|_{\mathrm{U}(k)}\big]=\big[E_{\Psi(\gamma)}^{\mathrm{U}(k-k_1-m)}:V'|_{\mathrm{U}(k-k_1-m)}\big]\]
where $V'$ is the ``$V$'' defined by $\zeta'=\big((\beta_2-m)^{k_2-m};\beta_3^{k_3};\dots;\beta_t^{k_t}\big)$ instead of $\zeta_V$. Notice that $\zeta'$ corresponds to the $\Lambda'$ defined above.

Moreover, combining with \eqref{v=zeta+d} and \eqref{gam1}, the properties of $h_1,\dots,h_k$ imply that $\mathcal{Q}_2(\tau)$ contains more than $k_1+m$ zeros. One can use (ii) of Lemma \ref{red_tsp} to see 
\[\big[E_{\gamma}^{\mathrm{U}(k)}\otimes E_{\mathcal{Q}_2(\tau)}^{\mathrm{U}(k)} : E_{\mathcal{Q}_1(\tau)}^{\mathrm{U}(k)} \big]=\big[E_{\Psi(\gamma)}^{\mathrm{U}(k-k_1-m)}\otimes E_{\mathcal{Q}_2'\big(\Psi(\tau)\big)}^{\mathrm{U}(k-k_1-m)} : E_{\mathcal{Q}_1'\big(\Psi(\tau)\big)}^{\mathrm{U}(k-k_1-m)} \big]\]
where $\mathcal{Q}_1', \mathcal{Q}_2'$ is the ``$\mathcal{Q}_1, \mathcal{Q}_2$" for $(k-k_1-m,r)$ instead of $(k,r)$.

Hence, we get 
\begin{equation}\label{com_tt'}\big[E_{\tau}^{\mathrm{U}(k)\times \mathrm{Sp}(r)}: (V\boxtimes \mathbf{1})\otimes \mathcal{E}_1\big]= \big[E_{\tau'}^{\mathrm{U}(k-k_1-m)\times \mathrm{Sp}(r)}: (V'\boxtimes \mathbf{1})\otimes \mathcal{E}_1'\big],\end{equation}
where $\tau'=\Psi(\tau)$, and $\mathcal{E}_1'$ is the ``$\mathcal{E}_1$'' for $(k-k_1-m,r)$ instead of $(k,r)$. Let $\mathcal{H}'$ be the corresponding highest weights of $\mathrm{U}(k-k_1-m)\times \mathrm{Sp}(r)$-types in $(V'\boxtimes \mathbf{1})\otimes \mathcal{E}_1'$.

By induction on $t$, $\mathcal{H}'\cap \Omega'$ contains unique element, and the multiplicity of such $\mathrm{U}(k-k_1-m)\times \mathrm{Sp}(r)$-type shows up in $(V'\boxtimes \mathbf{1})\otimes \mathcal{E}_1'$ with multiplicity one. By \eqref{com_tt'}, one sees that $\mathcal{H}\cap \Omega$ contains at most one element. If such element exists, then such $\mathrm{U}(k)\times \mathrm{Sp}(r)$-type shows up in $(V\boxtimes \mathbf{1})\otimes \mathcal{E}_1$ with multiplicity one.

On the other hand, from an element $\tau'\in \mathcal{H}'\cap \Omega'$, similarly to the proof of (iii) of Lemma \ref{kt_of_zs}, one can get an element $\tau$ by ``inverting" the map $\Phi$, and such $\tau$ is in $\mathcal{H}\cap \Omega$.  
This finishes the proof of the sub-case (a) of Case (III), and also the proof of the proposition. 
\end{proof}
  
\begin{remark}
As the proof of the proposition above, the spin lowest $K$-type can be obtained by \eqref{i_tau} and \eqref{v=zeta+d}, which are exactly the algorithm \ref{algorithm1} and algorithm \ref{algorithm2}. This completes the proof of the Theorem \ref{thm-main}.
\end{remark}

\subsection{The uniqueness and multiplicity-free of spin lowest $K$-type in the $\mathrm{GL}(n,\mathbb{R})$ case}

By the similar argument as $\mathrm{GL}(n,\mathbb{H})$ case, we will prove that in the $\mathrm{GL}(n,\mathbb{R})$ case, the spin lowest $K$-type of any Dirac series is unique and multiplicity-free. This verifies the Conjecture 4.5 in \cite{Dong_Wong_2:2022} about uniqueness of spin lowest $K$-type for Dirac series of $\mathrm{GL}(n,\mathbb{R})$. 

\subsubsection{Dirac series of $\mathrm{GL}(n,\mathbb{R})$}
Let $n=2m$ (even) or $2m+1$ (odd), and $G=\mathrm{GL}(n,\mathbb{R})$, $K=\mathrm{O}(n)$, with their Lie algebra $\mathfrak{g}_0=\mathfrak{gl}(n,\mathbb{R})$, $\mathfrak{k}_0=\mathfrak{s}\mathfrak{o}(n)$.  Let $\mathfrak{t}_0\subset \mathfrak{k}_0$ be a maximal abelian subalgebra and let $\mathfrak{k}$, $\mathfrak{t}$ be the complexification of $\mathfrak{k}_0$, $\mathfrak{t}_0$.

From Section 4 of \cite{Dong_Wong_2:2022}, the Dirac series of $\mathrm{GL}(n,\mathbb{R})$ are consists of all the representations
\[\pi=\mathrm{Ind}_{\prod_{i=1}^t\mathrm{GL}(2k_i,\mathbb{R})\times \mathrm{GL}(r,\mathbb{R})}^{\mathrm{GL}(n,\mathbb{R})}\big(\mathop{\boxtimes}\limits_{i=1}^t S_{k_i,m_i+1}\boxtimes \pi_u\big),\]
where $\sum_{i=1}^t2k_i+r=n$, and
\begin{itemize}
\item[(i)] $S_{k_i,m_i+1}$ are Speh representations whose infinitesimal characters restricted to $\mathfrak{t}$ are
\[\mathcal{A}_i:=(m_i+2k_i-1,\dots,m_i+1)\in \mathfrak{t}^*,\]
and lowest $K$-types are $(m_i+k_i+1,\dots,m_i+k_i+1)$.
\item[(ii)] $\pi_u$ is the unipotent representation with non-zero Dirac cohomology with infinitesimal character $\mathcal{U}$. \item[(iii)] The infinitesimal character $\Lambda=\mathop{\bigcup}\limits_i^t \mathcal{A}_i\bigcup \mathcal{U}$ of $\pi$ 
is regular.
\end{itemize}

\subsubsection{Spin lowest $K$-types of Dirac series}
Following \cite[Section 5]{Vogan:1986}, each $K$-type is parametrized by a tuple $(\tau;\eta)$, where $\tau=(a_1,\dots,a_m)$ satisfies $a_1\geq \dots\geq a_m\geq 0$, and $\eta=0,1$ or $\frac{1}{2}$. 

Explicitly,
the highest weights in $E^K_{(\tau;\eta)}$ consist of $(a_1,\dots,\pm a_m)$ when $n=2m$, $a_m>0$ (in this case $\eta=\frac{1}{2}$); otherwise, the highest weights in $E^K_{(\tau;\eta)}$ consist of just $(a_1,\dots,a_m)$ (in this case, $\eta=0$ (or $1$) when the action of $\mathrm{diag}(1,\dots,1,-1)\in O(n)$ is trivial (resp.  non-trivial)). Let $\overline{\bullet}$ denote sign-flipping the last coordinate of $\bullet$.

Let \[\rho_n=\begin{cases} (m,m-1,\dots,1),& n=2m, \\ (m+1/2,m-1/2,\dots,3/2),& n=2m+1,\end{cases}\]
and
\[\rho_c=\begin{cases} (m-1,m-2,\dots,0),& n=2m, \\ (m-1/2,m-3/2,\dots,1/2),& n=2m+1.\end{cases}\]

From Section 1 of \cite{Dong_Wong_2:2022},  we know that a $K$-type $E^K_{(\tau;\eta)}$ of $\pi$ is the spin lowest $K$-type if and only if there exists $\sigma\in \{\rho_n, \overline{\rho_n}\}$ (for $n=2m$), or $\sigma=\rho_n$ (for $n=2m+1$), 
such that 
\[\{\tau-\sigma\}+\rho_c=\Lambda, \ \text{or} \ \{\overline{\tau}-\sigma\}+\rho_c=\Lambda.\]

A spin lowest $K$-type of $\pi$ can be found out by the algorithms in Section 4 of \cite{Dong_Wong_2:2022}, and its highest weight is denoted by $(\mathcal{S}_1,\dots,\mathcal{S}_t,\mathcal{S}_u)$. We will show that it is the unique spin lowest $K$-type of $\pi$ with multiplicity one by showing Theorem \ref{uni_m1_GLnR}.

Continuing the above, let $k=\sum_{i=1}^t k_i$, and set
\[L_1=\mathrm{GL}(k,\mathbb{C}), \ L_2=\mathrm{GL}(r,\mathbb{R});\ \ L=L_1\times L_2.\]
Let $\mathfrak{l}$ be the complexification of the Lie algebra of $L$. Let $\mathfrak{q}=\mathfrak{l}\oplus\mathfrak{u}$ be a $\theta$-stable parabolic subalgebra. Let $K_1=L_1\cap K\cong \mathrm{U}(k)$, and let $K_2=L_2\cap K\cong \mathrm{O}(r)$. As the notations in Section 4 of \cite{Dong_Wong_2:2022}, let $Z^{\#}$ be the Dirac series of $\mathrm{GL}(k,\mathbb{C})$ corresponding to the chains $\bigcup_{i=1}^t\mathcal{A}_i^{1}$, which is the chain obtained by adding all the coordinates of $\mathcal{A}_i$ by $1$. Then $Z^{\#}$ has lowest $K_1$-type with highest weight
\[\zeta_{Z^{\#}}:= \big((m_1+k_1+1)^{k_1};\dots;(m_t+k_t+1)^{k_t}\big).\]

Hence, as Section 4 of \cite{Dong_Wong_2:2022}, $\pi$ can be written as cohomological induced module $\mathcal{R}_{\mathfrak{q}}^{S}(Z\boxtimes \pi_u)$.  The  Blattner formula tells that for any $K$-type $E^K_{(\tau;\eta)}$, we have
\[ [\pi,E^K_{(\tau;\eta)}]_K=\sum_{i}(-1)^i\sum_{k=0}^{\infty}[(Z^{\#}\boxtimes \pi_u)\otimes S^k(\mathfrak{u}\cap\mathfrak{p}):
H^i(\mathfrak{u}\cap \mathfrak{k},E^K_{(\tau;\eta)})]_{L\cap K}.\]
By Blattner formula, to show that $\pi$ contains a unique and multiplicity-free spin lowest $K$-type, it suffices to show Theorem \ref{uni_m1_GLnR} (see the content above the theorem). 

Let's recall all special unipotent representations $\pi_u$ with non-zero Dirac cohomology, and the $K_2$-types of $\pi_u$, see Section 2 of \cite{Dong_Wong_2:2022}. Such $\pi_u$ can be the trivial representation or $\mathrm{sgn}(\det)$, or as follows.

\textbf{Case:} $r$ is odd,
\[\pi_u=\mathrm{Ind}^{\mathrm{GL}(r)}_{\mathrm{GL}(r-2b+1) \times \mathrm{GL}(2b-1)}(\mathrm{sgn}(\det)^{\epsilon_1}\otimes \mathrm{sgn}(\det)^{\epsilon_2}),\]
where $1\leq b\leq [\frac{r}{2}]+1, \epsilon_1, \epsilon_2 \in \{0,1\}.$
Its $K_2$-types are
\begin{equation}\label{kt_of_pi_u_od}
\pi_u|_{K_2}=\left\{\begin{array}{ll}
\mathop{\bigoplus}\limits_{a_1\geq \dots\geq a_q\geq 0}E^{K_2}_{(2a_1,\dots,2a_q,0,\dots,0;\epsilon')}, & \epsilon_1+\epsilon_2\ \text{is even},\\
\mathop{\bigoplus}\limits_{a_1\geq \dots\geq a_q\geq 0}E^{K_2}_{(2a_1+1,\dots,2a_q+1,0,\dots,0;\epsilon')}, & \epsilon_1+\epsilon_2\ \text{is odd},\end{array}\right.
\end{equation}
where $q=\min\{r-2b+1,2b-1\}$, $\epsilon'=\begin{cases}\epsilon_1, & q \ \text{is odd}\\ \epsilon_2, & q \ \text{is even}\end{cases}$.

\textbf{Case:} $r$ is even,  
\[\pi_u=\mathrm{Ind}^{\mathrm{GL}(r)}_{\mathrm{GL}(r-2b) \times \mathrm{GL}(2b-1)\times \mathrm{GL}(1)}(\mathrm{sgn}(\det)^{\epsilon_1}\otimes \mathrm{sgn}(\det)^{\epsilon_2}\otimes \mathrm{sgn}(\det)^{\epsilon_3}),\]
where $0< b\leq \frac{r}{2}$, $\epsilon_1, \epsilon_2, \epsilon_3 \in \{0,1\}$ such that $\epsilon_2+\epsilon_3\equiv b \mod 2$. Let $q=\min \{r-2b,2b-1\}$. 

When $0< b<\frac{r}{2}$, its $K_2$-types are
\[ 
\pi_u|_{K_2}=
\mathop{\bigoplus}\limits_{\tiny\begin{array}{c}a_1\geq \beta_1\geq a_2\geq \beta_2\geq  \dots\geq a_{q+1}\geq 0\\
\beta_1\geq \beta_2\geq\dots\geq \beta_q, \ \beta_i\equiv \epsilon_1+\epsilon_2\mod 2 \end{array}}E^{K_2}_{(a_1,\dots,a_{q+1},0,\dots,0;\epsilon')},\]
where $\epsilon'=\begin{cases} \epsilon_1, & q\ \text{is odd},\\ \epsilon_2, & q\ \text{is even}.\end{cases}$

When $b=\frac{r}{2}$, its $K_2$-types are
\[
\pi_u|_{K_2}=\left\{\begin{array}{ll}
\mathop{\bigoplus}\limits_{a\geq 0}E^{K_2}_{(2a,0,\dots,0;\epsilon_2)}, & \epsilon_2+\epsilon_3\ \text{is even},\\
\mathop{\bigoplus}\limits_{a \geq 0}E^{K_2}_{(2a+1,0,\dots,0;\epsilon_2)}, & \epsilon_2+\epsilon_3\ \text{is odd}.\end{array}\right.\]

Hence, all highest weights of the $K_2$-types of the $\pi_u$ above contain at least $z$ zeros, where  
\begin{equation}\label{z_eq}
z=\begin{cases}[\frac{r}{2}]-q, & n \ \text{is odd} \\ \frac{r}{2}-q-1, &  n \ \text{is even}\end{cases}.\end{equation}
Under the notation as above, the infinitesimal character of $\pi_u$ is
\[\mathcal{U}=\begin{cases}(r-q-1,\dots,q+2,q,q-1,\dots,1), & r\ \text{is odd},\\
(r-q-2,\dots,q+2,q,q-1,\dots,1,0), & r\ \text{is even}.\end{cases}\]

The following lemma is about the $K_1\times K_2\cong \mathrm{U}(k)\times \mathrm{O}(r)$-types in $S(\mathfrak{u}\cap\mathfrak{p})$.
\begin{lemma}\label{sup_glnr}
As $\mathrm{U}(k)\times \mathrm{O}(r)$-representation,
let 
\[\mathcal{E}_1=\bigoplus_{\tiny\begin{array}{c}a_1\geq \dots\geq a_x> 0, x\leq \min\{k,[\frac{r}{2}]\}\\ \epsilon\ \text{is determined as below}\end{array}}\big(E^{\mathrm{U}(k)}_{(a_1,\dots,a_x,0^{k-x})}\boxtimes E^{\mathrm{O}(r)}_{(a_1,\dots,a_x,0^{[\frac{r}{2}]-x};\epsilon)}\big),\]
where $\epsilon \equiv \sum\limits_{i=1}^x a_x \mod 2$ when $r$ is odd, and $\epsilon=\begin{cases}0, & x<[\frac{r}{2}]\\
\frac{1}{2}, &  x=[\frac{r}{2}]\end{cases}$ when $r$ is even.

Let 
\[\mathcal{E}_2=\bigoplus_{\tiny\begin{array}{c}a_1\geq \dots\geq a_k\geq 0;\ \ c_1\geq \dots \geq c_{[\frac{r}{2}]} \geq 0;\\ \sum\limits_i a_i>\sum\limits_j c_j; \epsilon'\in \{0,1,\frac{1}{2}\}\end{array}}
\big(E^{\mathrm{U}(k)}_{(a_1,\dots,a_k)}\boxtimes E^{\mathrm{O}(r)}_{(c_1,\dots,c_{[\frac{r}{2}]};\epsilon')}\big).\]
Then $\mathcal{E}_1\oplus \mathcal{E}_2$ contain the set of $K_1\times K_2\cong \mathrm{U}(k)\times \mathrm{O}(r)$-types showing up in $S(\mathfrak{u}\cap\mathfrak{p})$, where the sets are not counting the multiplicities. Moreover, every $\mathrm{U}(k)\times \mathrm{O}(r)$-types in $\mathcal{E}_1$ shows up in $S(\mathfrak{u}\cap\mathfrak{p})$ with multiplicity one.
\end{lemma}

\begin{proof}
Firstly, as $\mathrm{U}(k)\times \mathrm{O}(r)$-representation,
\[\mathfrak{u}\cap\mathfrak{p} \cong \big(E^{\mathrm{U}(k)}_{(1,0,\dots,0)}\boxtimes E^{\mathrm{O}(r)}_{(1,0,\dots,0;\epsilon_1)}\big)\oplus \big(E^{\mathrm{U}(k)}_{(2,0,\dots,0)}\boxtimes E^{\mathrm{O}(r)}_{(0,0,\dots,0;\epsilon_0)}\big),\]
where $\epsilon_0=0$, and $\epsilon_1=\begin{cases}1, & r\ \text{is odd}, \\ 0, & r\ \text{is even},\end{cases}$ except $\epsilon_1=\frac{1}{2}$ when $r=2$.

Notice that $S\big(E^{\mathrm{U}(k)}_{(1,0,\dots,0)}\boxtimes E^{\mathrm{O}(r)}_{(1,0,\dots,0;\epsilon_1)}\big)$ is the restriction of $\mathrm{U}(k)\times \mathrm{U}(r)$-module \[S\big(E^{\mathrm{U}(k)}_{(1,0,\dots,0)}\boxtimes E^{\mathrm{U}(r)}_{(1,0,\dots,0)}\big)\]
to $\mathrm{U}(k)\times \mathrm{O}(r)$. By Corollary 5.6.6 of \cite{Goodman_Wallach:2009},
\[S\big(E^{\mathrm{U}(k)}_{(1,0,\dots,0)}\boxtimes E^{\mathrm{U}(r)}_{(1,0,\dots,0)}\big)\cong \bigoplus_{a_1\geq \dots\geq a_x\geq 0, x\leq \min\{k,r\}}\big(E^{\mathrm{U}(k)}_{(a_1,\dots,a_x,0^{k-x})}\boxtimes E^{\mathrm{U}(r)}_{(a_1,\dots,a_x,0^{r-x})}\big).\]
We set $a_i=0$ for $i>x$. By a similar argument as Lemma \ref{GLtoSp}, the highest weights of $\mathrm{O}(r)$-types in $E^{\mathrm{U}(r)}_{(a_1,\dots,a_x,0^{r-x})}$ are of the form $(a_1-a_{r},a_2-a_{r-1},\dots,a_{[\frac{r}{2}]}-a_{[\frac{r}{2}]+1})+\delta$ with 
\[\delta=\left\{\begin{array}{ll}\sum\limits_{1\leq i<j\leq [\frac{r}{2}],\ a_{ij}\leq 0} a_{ij}(e_i-e_j)+\sum\limits_{1\leq i<j\leq [\frac{r}{2}],\ c'_{ij}\leq 0}c'_{ij}(e_i+e_j),& r\ \text{is even},\\ \sum\limits_{1\leq i<j\leq [\frac{r}{2}],\ a_{ij}\leq 0} a_{ij}(e_i-e_j)+\sum\limits_{1\leq i<j\leq [\frac{r}{2}],\ c'_{ij}\leq 0} c'_{ij}(e_i+e_j)+\sum\limits_{1\leq i\leq [\frac{r}{2}],\ c'_i\leq 0}c'_ie_i, & r\ \text{is odd}.\end{array}\right.\]

If $a_{[\frac{r}{2}]+1}>0$, then the $\mathrm{U}(k)\times \mathrm{O}(r)$-types in $E^{\mathrm{U}(k)}_{(a_1,\dots,a_x,0,\dots,0)}\boxtimes E^{\mathrm{U}(r)}_{(a_1,\dots,a_x,0,\dots,0)}$ are contained in $\mathcal{E}_2$. If $a_{[\frac{r}{2}]+1}=0$, by the Littlewood-Richardson branching law, see \cite{Knapp:2002}, Theorem 9.75, we know that the $\mathrm{O}(r)$-types in $E^{\mathrm{U}(r)}_{(a_1,\dots,a_r)}$ consist of $E^{\mathrm{O}(r)}_{(a_1,\dots,a_{[\frac{r}{2}]};\epsilon)}$ (with $\epsilon$ defined the same as $\mathcal{E}_1$) with multiplicity one and some $E^{\mathrm{O}(r)}_{(c_1,\dots,c_{[\frac{r}{2}]};\epsilon')}$
with $\sum\limits_i c_i<\sum\limits_i a_i$.

So the set of $\mathrm{U}(k)\times \mathrm{O}(r)$-types in $S\big(E^{\mathrm{U}(k)}_{(1,0,\dots,0)}\boxtimes E^{\mathrm{O}(r)}_{(1,0,\dots,0;\epsilon_1)}\big)$ are contained in $\mathcal{E}_1\oplus \mathcal{E}_2$ and all $\mathrm{U}(k)\times \mathrm{O}(r)$-types of $\mathcal{E}_1$ show up in $S\big(E^{\mathrm{U}(k)}_{(1,0,\dots,0)}\boxtimes E^{\mathrm{O}(r)}_{(1,0,\dots,0;\epsilon_1)}\big)$ with multiplicity one.

Moreover, when $l>0$, for any $m\in \mathbb{N}$, the set of $\mathrm{U}(k)\times \mathrm{O}(r)$-types in
\[S^m\big(E^{\mathrm{U}(k)}_{(1,0,\dots,0)}\boxtimes E^{\mathrm{U}(r)}_{(1,0,\dots,0)}\big)\otimes S^l\big(E^{\mathrm{U}(k)}_{(2,0,\dots,0)}\boxtimes E^{\mathrm{U}(r)}_{(0,0,\dots,0)}\big)\]
are contained in $\mathcal{E}_2$. Now the lemma follows.
\end{proof} 

\begin{lemma}\label{tensor_o}
Let $1\leq p,q\leq [\frac{r}{2}]$ such that $p+q\leq [\frac{r}{2}]$. Let $a=(a_1,\dots,a_p,0^{[\frac{r}{2}]-p})$ and $b=(b_1,\dots,b_q,0^{[\frac{r}{2}]-q})$ be two highest weight of $\mathrm{SO}(n)$. Let $\alpha_1\geq \dots \geq \alpha_p\geq \beta_1\geq \cdots\geq \beta_q\geq 0$ be the integers such that $\sum\limits_{j=1}^q\beta_j=\sum\limits_{j=1}^qb_j$. Then 
\[\big[E_{(a;\eta_1)}^{\mathrm{O}(r)}\otimes E_{(b;\eta_2)}^{\mathrm{O}(r)}: E_{(\alpha_1,\dots,\alpha_p,\beta_1,\dots,\beta_q,0^{[\frac{r}{2}]-p-q};\eta)}^{\mathrm{O}(r)}\big]>0\]
only if $\alpha_i=a_i$ for $1\leq i\leq p$, $\beta_j=b_j$ for $1\leq j\leq q$, and $\eta$ is uniquely determine by $\eta_1$ and $\eta_2$. When they happen, the multiplicity is exactly one.
\end{lemma}
\begin{proof} 
Let $\xi=(\alpha_1,\dots,\alpha_p,\beta_1,\dots,\beta_q,0^{[\frac{r}{2}]-p-q})$. By decomposition of tensor products of $\mathrm{O}(r)$, see Theorem 2.1.2 of \cite{Howe_Tan_Willenbring:2005}, one has 
\[\big[E_{a}^{\mathrm{O}(r)}\otimes E_{b}^{\mathrm{O}(r)}: E_{\xi}^{\mathrm{O}(r)}\big]=\sum_{\lambda,\mu,\nu}c^{\xi}_{\lambda,\mu}c^{a}_{\lambda,\nu}c^{b}_{\mu,\nu},\]
 where $\lambda,\mu,\nu$ run over all partitions with non-negative integer entries, and $c^{u}_{v,w}$ is the Littlewood-Richardson coefficient, that is, $[E_{v}^{\mathrm{U}(r)}\otimes E_{w}^{\mathrm{U}(r)}:E_{u}^{\mathrm{U}(r)}]_{\mathrm{U}(r)}$.

Assume that there are some non-negative integer partitions $\lambda,\mu,\nu$ such that $c^{\xi}_{\lambda,\mu}c^{a}_{\lambda,\nu}c^{b}_{\mu,\nu}\neq 0$. Since $a$ has at most $p$ non-zero entries, if $c^{a}_{\lambda,\nu}\neq 0$, then $\lambda$ also has at most $p$ non-zero entries. Since $c^{\xi}_{\lambda,\mu}\neq 0$, one has $\sum\limits_{j=1}^q \beta_j\leq \sum\limits \mu$, where $\sum\limits \mu$ means summing up all coordinates of $\mu$. Moreover, since $c^{b}_{\mu,\nu}\neq 0$, one has $\sum\limits_{j=1}^q b_j= \sum \mu+\sum \nu\geq \sum \mu$. Combining with the condition $\sum\limits_{j=1}^q b_j=\sum\limits_{j=1}^q \beta_j$, we know $\sum \nu=0$, i.e. $\nu=0$. By $c^{a}_{\lambda,\nu}c^{b}_{\mu,\nu}\neq 0$, one gets $\lambda=a$ and $\mu=b$.

So $\big[E_{a}^{\mathrm{O}(r)}\otimes E_{b}^{\mathrm{O}(r)}: E_{\xi}^{\mathrm{O}(r)}\big]_{\mathrm{O}(r)}=c^{\xi}_{a,b}c^{a}_{a,0}c^{b}_{b,0}$. When $c^{\xi}_{a,b}\neq 0$, since $\sum\limits_{j=1}^qb_j=\sum\limits_{j=1}^q\beta_j$, one has  $\beta_j=b_j$ for $1\leq j\leq q$, and $\alpha_i=a_i$ for $1\leq i\leq p$. In this case, the multiplicity is one.
 
\end{proof}

Let us discuss the spin lowest $K$-type: as above, a $K$-type of $\pi$, $E_{(\tau;\eta)}^K$, is spin lowest $K$-type if and only if $\{\tau-\sigma\}+\rho_c=\Lambda$ or $\{\overline{\tau}-\sigma\}+\rho_c=\Lambda$, equivalently, (i) $\tau=w(\Lambda-\rho_c)+\sigma$ or (ii) $\overline{\tau}=w(\Lambda-\rho_c)+\sigma$. 
When $\eta\neq \frac{1}{2}$, one only need to consider the case (i). When $\eta=\frac{1}{2}$ (so $n$ is even, the last coordinate of $\tau$ is positive),  if $\overline{\tau}=w(\Lambda-\rho_c)+\sigma$, then  $\tau=\overline{w(\Lambda-\rho_c)}+\overline{\sigma}$. Notice that the last coordinate of $\rho_c$ and $\Lambda$ are zeros, so $w(\Lambda-\rho_c)$ contains some zero coordinate. Hence, $\overline{w(\Lambda-\rho_c)}$ is equal to $w'(\Lambda-\rho_c)$ for some $w'\in W(D_m)$. In conclusion, one only need to consider the case (i). Therefore, to show that $\pi$ contains a unique and multiplicity-free spin lowest $K$-type, it suffices to show Theorem \ref{uni_m1_GLnR}. 

\begin{theorem}\label{uni_m1_GLnR}
There is exactly one element $\tau$ in
\[
\Omega_0:=\left\{w(\Lambda-\rho_c)+\sigma\ \Big\vert\ \begin{array}{c} w\in W(\mathfrak{k},\mathfrak{t})\ \text{s.t.}\  w(\Lambda-\rho_c)+\sigma\ \text{is}\ \Delta^+(\mathfrak{k},\mathfrak{t})\text{-dominant}\\ \text{and has non-negative last coordinate}\end{array}\right\} \]
such that
\begin{equation}\label{cup>0_R}
\big[(Z^{\#}\boxtimes \pi_u)\otimes S^k(\mathfrak{u}\cap\mathfrak{p}):
H^i(\mathfrak{u}\cap \mathfrak{k},E^K_{(\tau;\eta)})\big]_{L\cap K}>0
\end{equation}
for some $i\in \mathbb{N}$ and some $\eta$. When \eqref{cup>0_R} happens, then $i=0$, $\eta$ is also uniquely determined, and the multiplicity \eqref{cup>0_R} is one.
\end{theorem}

\begin{proof}

\noindent {\bf Step 1:} We characterize  the sets of highest weights of $L\cap K$-types in $(Z^{\#}\boxtimes\pi_u)\otimes S^k(\mathfrak{u}\cap\mathfrak{p})$ and $\mathop{\oplus}\limits_{\tau\in \Omega_0} \mathop{\oplus}\limits_{i\in \mathbb{N}} H^i(\mathfrak{u}\cap \mathfrak{k},E^K_{(\tau;\eta)})$. 
 
For $(Z^{\#}\boxtimes\pi_u)\otimes S(\mathfrak{u}\cap\mathfrak{p})$:  
By Lemma \ref{sup_glnr},  the highest weights of the $L\cap K$-types in $(Z^{\#}\boxtimes \pi_u)\otimes S(\mathfrak{u}\cap\mathfrak{p})$ are contained in the set
\[\mathcal{A}:=\text{the set of highest weights of }\ L\cap K\text{-types in}  (Z^{\#}\boxtimes \pi_u)\otimes
(\mathcal{E}_1 \oplus
\mathcal{E}_2).\]

For $\mathop{\oplus}\limits_{\tau\in \Omega_0} \mathop{\oplus}\limits_{i\in \mathbb{N}} H^i(\mathfrak{u}\cap \mathfrak{k},E^K_{(\tau;\eta)})$:
we have for any $K$-type $E^K_{(\tau;\eta)}$,
\[H^i(\mathfrak{u}\cap \mathfrak{k},E^K_{(\tau;\eta)})=\bigoplus_{l(w_1)=i,w_1\in W^1}E^{L\cap K}_{(w_1(\tau+\rho_c)-\rho_c;\eta')}\oplus \bigoplus_{l(w_1)=i,w_1\in W^1}E^{L\cap K}_{(w_1(\overline{\tau}+\rho_c)-\rho_c;\eta'')},\]
where the second term (involving  $\overline{\tau}$) shows up only when $n$ is even, and the last coordinate of $\tau$ is positive (that is, $\eta=\frac{1}{2}$). Here,
\[W^1=\{w_1\in W(\mathfrak{k},\mathfrak{t})\ |\ \langle w_1\rho_c,\alpha\rangle>0, \forall \alpha\in \Delta^+(\mathfrak{l}\cap \mathfrak{k},\mathfrak{t})\},\]
and $\eta',\eta''\in \{0,1,\frac{1}{2}\}$ are uniquely determined by $\eta$ and $w_1$.  Since $E^{L\cap K}_{(w_1(\tau+\rho_c)-\rho_c;\eta)}$ is a $\mathrm{U}(k)\times \mathrm{O}(r)$-type, the last coordinate of $w_1(\tau+\rho_c)-\rho_c$ is non-negative (Similarly for $w_1(\overline{\tau}+\rho_c)-\rho_c$). 

 Assume that a $L\cap K$-type  $E^{L\cap K}_{(v;\widetilde{\eta})}$ shows up in $\mathop{\oplus}\limits_{\tau\in \Omega_0}\mathop{\oplus}\limits_{i\in \mathbb{N}}H^i(\mathfrak{u}\cap \mathfrak{k},E_{(\tau;\eta)}^K)$, then $v$ is contained in
\[
\begin{aligned}\mathcal{B}= &\ \{w_1(\tau+\rho_c)-\rho_c\ |\  \tau\in \Omega_0, w_1\in W^1,\ \text{the last coordinate of} \ w_1(\tau+\rho_c)-\rho_c \ \geq 0 \}  \\ \cup  &\ \{w_1(\overline{\tau}+\rho_c)-\rho_c\  |\ \tau\in \Omega_0,    w_1\in W^1,\ \text{the last coordinate of} \ w_1(\overline{\tau}+\rho_c)-\rho_c \ \geq 0\}.\end{aligned}\]

\noindent {\bf Step 2:} Further approximation of the set  $\mathcal{A}\cap \mathcal{B}$. 

\textbf{Claim:} Let $y\in \mathcal{A}\cap \mathcal{B}$, then $y$ must be a highest weight of  $(Z^{\#}\boxtimes \pi_u)\otimes
\mathcal{E}_1$ ( a subset of $\mathcal{A}$). Moreover, $y$ can not be of the form $w_1(\tau+\rho_c)-\rho_c, w_1\in (W^1\setminus \{1\})$ (or $w_1(\overline{\tau}+\rho_c)-\rho_c$, $w_1\in W^1$),  and $y=\tau$ with $\tau=w(\lambda-\rho_c)+\sigma$ an element in $\Omega_0$  such that $w(\Lambda-\rho_c)$ satisfies \eqref{w=1/2} when $n$ is odd or \eqref{w_rc_ev} when $n$ is even. 

Similar to the proof of Theorem \ref{uni_m1_GLnH}, the main idea is to show that $\phi(y)=\phi\big((\zeta_{Z^{\#}};0^{[\frac{r}{2}]})\big)$ with $\phi$ defined by
\[\phi:\mathbb{R}^m\to \mathbb{R}, \ (y_1,\dots,y_m)\mapsto \sum_{i=1}^{k}y_i-\sum_{j=m-z+1}^m y_j,\]
where $m=[\frac{n}{2}]=k+[\frac{r}{2}]$ and $z$ is defined as \eqref{z_eq}. Notice that when $z=0$, one has $\Lambda-\rho_c$ is either $(0,\dots,0)$ or $(1,\dots,1)$. In these cases, it is easier to prove the theorem by a similar way.  From now on, we assume $z\neq 0$. 

Similar to the Theorem \ref{uni_m1_GLnH}, by definition of $\mathcal{A}$, and definition of $\mathcal{E}_1$, $\mathcal{E}_2$ in Lemma \ref{sup_glnr}, one has
\begin{equation}\label{phi_A_R}
\phi(y)\geq \phi\big((\zeta_{Z^{\#}};0^r)\big).\end{equation}

For any $x\in \mathbb{N}_+$, define $S_0: \mathbb{R}^x\to \mathbb{R}$, by $S_0\big((y_1,\dots,y_x)\big)=\sum\limits_{i=1}^x y_i$. Define \[\mathcal{N}:=\{(x_1,\dots,x_k,y_1,\dots,y_{[\frac{r}{2}]})\in \mathbb{R}^{k+[\frac{r}{2}]}\ |\ x_{\bullet}\leq 0, y_{\bullet}\geq 0\}.\]
To show for $y\in \mathcal{B}$, \begin{equation}\label{phi_B_R}\phi(y)\leq\phi\big((\zeta_{Z^{\#}};0^r)\big),\end{equation}
it is more convenient to discuss the cases ``$n$ is odd" and ``$n$ is even" separably.

\textbf{Case}: $n=2m+1$ is odd.

For any $w_1\in W^1$,  one has
\[w_1\rho_c-\rho_c\in \mathcal{N},\ w_1\tau-\tau\in \mathcal{N};\] 
So for $
y=w_1(\tau+\rho_c)-\rho_c=\tau+(w_1\tau-\tau)+(w_1\rho_c-\rho_c)$,
\begin{equation}\label{w_rho_r}\phi(y)\leq \phi(\tau)=\phi\big(w(\Lambda-\rho_c)+\sigma)=\phi(w(\Lambda-\rho_c))+\phi(\sigma),\end{equation}
and $\phi(y)$ equals to the right side only if $(w_1\tau-\tau)+(w_1\rho_c-\rho_c)=(0^{k},\underbrace{\dots}_{[\frac{r}{2}]-z},0^z)$, which implies $w_1=1$, and $y=\tau$.

Notice that $\Lambda-\rho_c$ is a vector whose entries are positive half integers and contain at least $(r-1)/2-z$ ``$1/2$'s", so we have
\begin{equation}\label{w_1/2}\phi(w(\Lambda-\rho_c))\leq S_0(\Lambda-\rho_c)-((r-1)/2-z)/2,\end{equation}
and \eqref{w_1/2} becomes an equality if and only if
\begin{equation}\label{w=1/2}
w(\Lambda-\rho_c)=(\underbrace{\dots}_{k\ \text{entries}\  \geq 0},\underbrace{\dots}_{\frac{r-1}{2}-z\ \text{entries in}\ \{\pm\frac{1}{2}\}},\underbrace{\dots}_{z\ \text{entries}\   \leq 0}).
\end{equation} 

A direct calculation shows that
\[\phi(\sigma)-((r-1)/2-z)/2=S_0(\sigma)-S_0(\mathcal{U})-(r-1)/2.\]

Combining \eqref{w_1/2}, we have
\[\phi\big(w(\Lambda-\rho_c)\big)+\phi(\sigma)
\leq S_0(\Lambda-\rho_c)+S_0(\sigma)-S_0(\mathcal{U})-(r-1)/2.\]

Moreover, it is not hard to see that $S_0(\Lambda-\rho_c)+S_0(\sigma)-S_0(\mathcal{U})-(r-1)/2=\phi\big((\zeta_{Z^{\#}};0)\big)$.
Hence,
\begin{equation}\label{phi_b}
\phi\big(w(\Lambda-\rho_c)+\sigma\big)\leq \phi\big((\zeta_{Z^{\#}};0)\big).\end{equation}

Combining \eqref{w_rho_r} and \eqref{phi_b}, we get \eqref{phi_B_R} when $n$ is odd. By \eqref{phi_A_R}, for any $y\in \mathcal{A}\cap \mathcal{B}$, we have $\phi(y)=\phi\big((\zeta_{Z^{\#}};0)\big)$. By the discussion above, the claim follows when $n$ is odd.

\textbf{Case}: $n=2m$ is even.    

Let us show that
\begin{equation}\label{phi_B_R_ev}
\phi(y)\leq S_0(\Lambda-\rho_c)+\phi(\rho_n), \ \forall \ y=w_1(\tau+\rho_c)-\rho_c\ \text{or}\ w_1(\overline{\tau}+\rho_c)-\rho_c\in \mathcal{B},\end{equation}
where $w_1\in W^1$, and $\tau=w(\Lambda-\rho_c)+\sigma\in \Omega_0$.

Notice that 
\[\phi(w(\Lambda-\rho_c))\leq S_0(\Lambda-\rho_c)+\phi(\rho_n)\]
and equality holds if and only if
\begin{equation}\label{w_rc_ev}
w(\Lambda-\rho_c)=(\underbrace{\dots}_{k\ \text{entries}\  \geq 0},0^{\frac{r}{2}-z},\underbrace{\dots}_{z\ \text{entries}\   \leq 0}). \end{equation}

When $\sigma=\rho_n$, $y=w_1(\tau+\rho_c)-\rho_c$, one has:
$(w_1\tau-\tau)+(w_1\rho_c-\rho_c)\in \mathcal{N}$ by that $w_1\in W^1$ and the last coordinate of $y$ is non-negative. Hence, $\phi(y)\leq \phi(\tau)=\phi(w(\Lambda-\rho_c))+\phi(\sigma)\leq S_0(\Lambda-\rho_c)+\phi(\rho_n)$, and so the \eqref{phi_B_R_ev}, and the equality holds only when $w_1=1$ and  \eqref{w_rc_ev} happens.

When $\sigma=\overline{\rho_n}$, $y=w_1(\tau+\rho_c)-\rho_c$,  one has:
$(w_1\tau-\tau)+(w_1\rho_c-\rho_c)\in \mathcal{N}$. Assume the last coordinate of $\tau$ is $a$ ($>0$), then $\phi(y)\leq \phi(\tau)=\phi(w(\Lambda-\rho_c))+\phi(\sigma)\leq S_0(\Lambda-\rho_c)-2(a+1)+\phi(\sigma)<S_0(\Lambda-\rho_c)+\phi(\rho_n)$, 
where the third to the last inequality is due to the last coordinate of $w(\Lambda-\rho_c)$ is $a+1$, since the last coordinate of $\tau=w(\Lambda-\rho_c)+\sigma$ is $a$. 

When $\sigma=\rho_n$, $y=w_1(\overline{\tau}+\rho_c)-\rho_c$, assume the last coordinate of $\tau$ is $a$ ($>0$). Then $\phi(y)\leq\phi(\overline{\tau})-2a=\phi(\tau)\leq S_0(\Lambda-\rho_c)+\phi(\rho_n)-2(a-1)\leq S_0(\Lambda-\rho_c)+\phi(\rho_n)$.
Here, the first inequality is due to that the last coordinate of $y$ is non-negative, then the last coordinate of $y$ must be $\geq a$. The equality happens only when $a=1$, $w_1\tau=\overline{\tau}$, which is impossible (since $w_1$ must change even number of signs).

When $\sigma=\overline{\rho_n}$, $y=w_1(\overline{\tau}+\rho_c)-\rho_c$, by similar discussion, $\phi(y)<S_0(\Lambda-\rho_c)+\phi(\rho_n)$.

In conclusion,  $\phi(y)\leq S_0(\Lambda-\rho_c)+\phi(\rho_n)$, and equality happens only when $\sigma=\rho_n$, $w_1=1$, and \eqref{w_rc_ev} holds. It is not hard to see that $S_0(\Lambda-\rho_c)+\phi(\rho_n)=\phi\big((\zeta_{Z^{\#}};0)\big)$.
So the claim follows when $n$ is even.

\noindent {\bf Step 3:} Let $\mathcal{H}$ be the set of highest weights, with non-negative last coordinate, of $L\cap K$-types in $(Z^{\#}\boxtimes \mathbf{1})\otimes \mathcal{E}_1$, and let 
\[ \Omega= \big\{w(\Lambda-\rho_c)+\rho_n\ |\ w(\Lambda-\rho_c)+\rho_n\in \Omega_0,\ w(\Lambda-\rho_c)\ \text{satisfies \eqref{w=1/2}($n$ is odd) or \eqref{w_rc_ev}($n$ is even)}\big\}. \]

By step 2, to prove the theorem, it suffices to show 
\[\sum_{\eta\in\{0,1,\frac{1}{2}\}}\big[(Z^{\#}\boxtimes \pi_u)\otimes \mathcal{E}_1: \mathop{\oplus}\limits_{\tau\in \Omega}  E^{L\cap K}_{(\tau;\eta)}\big]_{L\cap K}=1.\]

We give the details for the proof of a basic case when $n=2m+1$ is odd. Other cases can be proved similarly as Proposition \ref{prop_ktype}.

Assume the chains are interlaced as follows:
\[\begin{array}{c} (\mathcal{A}_1)   \dots \  (\mathcal{A}_t)\\
 (\quad \quad \quad \mathcal{U} \quad \quad \quad) \end{array},\]
where $\mathcal{A}_i=(m_i+2k_i-1,\dots,m_i+1)$ and
$\mathcal{U}=(r-q-1,\dots,q+2,q,q-1,\dots,1)$ satisfy $r-q-1>m_1+2k_1-1\geq m_1+1>\dots>m_t+2k_t-1\geq m_t+1>q$.
Recall that $\rho_c=\{\frac{n-2}{2},\frac{1}{2}\}$, and $\rho_n=\{\frac{n}{2},\frac{3}{2}\}$.
One has 
\[\Lambda-\rho_c=
(\{s,p_1\};p_1^{2k_1-1};\{p_1,p_2\};p_2^{2k_2-1};\dots;p_t^{2k_t-1};\{p_t,\frac{1}{2}\};(\frac{1}{2})^{q-1}),\]
for $s=r-q-\frac{n}{2}$, $p_i=\frac{m_i-n+r-q+1}{2}+\sum\limits_{j=1}^i k_j$. Notice that $s-\frac{1}{2}+2\sum\limits_{i=1}^tk_i+q=[\frac{n}{2}]$. 

Assume that $w(\Lambda-\rho_c)+\rho_n\in \mathcal{H}\cap \Omega$. By the property of $\mathcal{H}$, we know the last $[\frac{r}{2}]-q$($=s-\frac{1}{2}+\sum\limits_{i=1}^tk_i$) entries of the elements in $\mathcal{H}$ contain at most $k$ non-zero entries.
Since $w(\Lambda-\rho_c)+\rho_n\in \mathcal{H}$, we have
$$
w(\Lambda-\rho_c)=(\underbrace{\dots}_{[\frac{n}{2}]-s+\frac{1}{2}};\{-s,-3/2\}).$$

Applying the similar argument to Case (I) in the Proposition \ref{prop_ktype} to the $(k-k_t+1)^{st}$ through $k^{th}$ entries and the $(k+q+1)^{th}$ to the $(k+q+k_t)^{th}$ entries of $w(\Lambda-\rho_c)+\rho_n$, one gets
\[w(\Lambda-\rho_c)=(\underbrace{\dots}_{k-k_t},p_t^{k_t}; \underbrace{\dots}_{q};(-p_t)^{k_t};\underbrace{\dots}_{k-k_t};\{-s,-3/2\}).\]
Similarly, we can determine the rest of entries of $w(\Lambda-\rho_c)$. Then one gets
\begin{equation}\label{w_uniq_R}w(\Lambda-\rho_c)=(p_1^{k_1};\dots;p_t^{k_t}; \underbrace{\cdots \star \cdots}_{q};(-p_t)^{k_t};\dots,(-p_1)^{k_1};\{-s,-3/2\}),\end{equation}
where $\underbrace{\cdots \star \cdots}_{q}$ are in $\{\pm \frac{1}{2}\}$.

For any $\delta\in \{\pm \frac{1}{2}\}^{q}$, let $y_{\delta}=w(\Lambda-\rho_c)+\rho_n$ satisfying \eqref{w_uniq_R} with $\underbrace{\cdots \star \cdots}_{q}=\delta$.
To prove the theorem, it suffices to show 
\[\sum_{\eta\in\{0,1\}}\sum_{\delta\in \{\pm \frac{1}{2}\}^{q}} \big[(Z^{\#}\boxtimes \pi_u)\otimes \mathcal{E}_1:  E^K_{(y_{\delta};\eta)}\big]_{L\cap K}=1.\]
Notice that
\begin{equation}\label{yd-z}
\begin{array}{rl} & y_{\delta}-\big(\zeta_{Z^{\#}};0^{[\frac{r}{2}]}\big) \vspace{0.8em} \\
= & \big(\{\frac{r-q-m_1-1}{2},\frac{r-q-m_1-2k_1+1}{2}\};\dots;\{\frac{r-q-m_t-1}{2},\frac{r-q-m_t-2k_t+1}{2}\}; \delta+\{\frac{n-2k}{2},\frac{n-2k-2q+2}{2}\}; 
 \vspace{0.8em}
\\ & 
\{\frac{r-q-m_t-1}{2},\frac{r-q-m_t-2k_t+1}{2}\};\dots; \{\frac{r-q-m_1-1}{2},\frac{r-q-m_1-2k_1+1}{2}\}; 0^{s-\frac{1}{2}}\big),\end{array}\end{equation}
Define $\mathcal{Q}_1: \mathbb{R}^{[\frac{n}{2}]} \to \mathbb{R}^k$ and $\mathcal{Q}_2: \mathbb{R}^{[\frac{n}{2}]} \to \mathbb{R}^{[\frac{r}{2}]}$ by 
\[\begin{aligned}
& \mathcal{Q}_1\big((x_1,\dots,x_k;y_1,\dots,y_{[\frac{r}{2}]})\big)=(x_1,\dots,x_k),\\ 
& \mathcal{Q}_2\big((x_1,\dots,x_k;y_1,\dots,y_{[\frac{r}{2}]})\big) = (y_1,\dots,y_{[\frac{r}{2}]}).\end{aligned}\] 
Then by the property of tensor product, one has
\begin{equation}\label{mul=sum}\begin{array}{rl}& \big[(Z^{\#}\boxtimes \pi_u)\otimes \mathcal{E}_1:  E^K_{(y_{\delta};\eta)}\big]_{L\cap K}\vspace{0.8em} \\  = & \sum\limits_{E_{\gamma}^{K_1}}\sum\limits_{E_{(\chi;\varepsilon)}^{K_2}}\sum\limits_{\tiny\begin{array}{c}a=(a_1,\dots,a_k)\in \mathbb{N}^{k}\\ a_1\geq\dots\geq a_k\geq 0\end{array}}\big[E_{\gamma}^{K_1}:Z^{\#}|_{K_1}\big]_{K_1}\cdot\big[E_{\gamma}^{K_1}\otimes E_a^{K_1}: E_{\mathcal{Q}_1(y_{\delta})}^{K_1}]_{K_1}\cdot \vspace{0.8em} \\& 
[E_{(\chi;\varepsilon)}^{K_2}:\pi_u|_{K_2}]_{K_2}\cdot [E_{(\chi;\varepsilon)}^{K_2}\otimes E_{\tiny \big(a,0^{[\frac{r}{2}]-k};\epsilon(a)\big)}^{K_2}: E_{(\mathcal{Q}_2(y_{\delta});\eta)}^{K_2}]_{K_2}, \end{array}\end{equation}
where $\epsilon(a)\equiv \sum\limits_{i=1}^ka_i$ (mod 2) as Lemma \ref{sup_glnr}.

Let
$a'=\mathcal{Q}_1\Big(y_{\delta}-\big(\zeta_{Z^{\#}};0^{[\frac{r}{2}]}\big)\Big)$, then $a'\succ 0^k$ by \eqref{yd-z}. Let
\[\begin{array}{c}a''=\big(\{\frac{r-q-m_t-1}{2},\frac{r-q-m_t-2k_t+1}{2}\};\dots; \{\frac{r-q-m_1-1}{2},\frac{r-q-m_1-2k_1+1}{2}\}\big).\end{array}\] 
By (i) of Lemma \ref{kt_of_zs}, $\big[E_{\gamma}^{K_1}:Z^{\#}|_{K_1}\big]_{K_1}\cdot\big[E_{\gamma}^{K_1}\otimes E_a^{K_1}: E_{\mathcal{Q}_1(y_{\delta})}^{K_1}]_{K_1}\neq 0$ only when $a$ satisfy $\sum a=\sum a'$. 

Notice that $a$ contains at most $k$'s non-zero entries. When $\big[E_{(\chi;\varepsilon)}^{K_2}:\pi_u|_{K_2}\big]_{K_2}\neq 0$, the $\chi$ contains at most $q$'s non-zero entries. If $\big[E_{(\chi;\varepsilon)}^{K_2}\otimes E_{\tiny \big(a,0^{[\frac{r}{2}]-k};\epsilon(a)\big)}^{K_2}: E_{(\mathcal{Q}_2(y_{\delta});\eta)}^{K_2}\big]_{K_2}>0$, notice that $\sum a''=\sum a'=\sum a$, by Lemma \ref{tensor_o}, one has $a=a''$. Therefore one can get only 
\[\begin{array}{c}\chi=(\delta+\{\frac{n-2k}{2},\frac{n-2k-2q+2}{2}\};0^{[\frac{r}{2}]-q})\end{array}\] 
can contribute to the left side of \eqref{mul=sum}. Moreover, by \eqref{kt_of_pi_u_od}, there is unique $\delta \in \{\pm\frac{1}{2}\}^q$ such that $(\delta+\{\frac{n-2k}{2},\frac{n-2k-2q+2}{2}\};0^{[\frac{r}{2}]-q})$ is a highest weight of the $K_2$-type of $\pi_u$. 
By the similar argument of Case (I) of Proposition \ref{prop_ktype}, only $\gamma=\zeta_{Z^{\#}}$ could produce $\big[E_{\gamma}^{K_1}:Z^{\#}|_{K_1}\big]_{K_1}\cdot\big[E_{\gamma}^{K_1}\otimes E_a^{K_1}: E_{\mathcal{Q}_1(y_{\delta})}^{K_1}]_{K_1}> 0$.

Fix the $\gamma=\zeta_{Z^{\#}}$, $a$ and $\delta$ as above, one can see that there is unique $\eta$ such that the left side of  \eqref{mul=sum} is nonzero. In this case, 
\[\begin{array}{rl} \big[(Z^{\#}\boxtimes \pi_u)\otimes \mathcal{E}_1:  E^K_{(y_{\delta};\eta)}\big]_{L\cap K}\vspace{0.8em}  = & \big[E_{\gamma}^{K_1}:Z^{\#}|_{K_1}\big]_{K_1}\cdot\big[E_{\gamma}^{K_1}\otimes E_a^{K_1}: E_{\mathcal{Q}_1(y_{\delta})}^{K_1}\big]_{K_1}\cdot  \\ & 
\big[E_{(\chi;\varepsilon)}^{K_2}:\pi_u|_{K_2}\big]_{K_2}\cdot \big[E_{(\chi;\varepsilon)}^{K_2}\otimes E_{\tiny \big(a,0^{[\frac{r}{2}]-k};\epsilon(a)\big)}^{K_2}: E_{(\mathcal{Q}_2(y_{\delta});\eta)}^{K_2}\big]_{K_2}.\end{array}\] 
One can check directly that every term on the right side equals to $1$, showing that the multiplicity is one.

This finishes the proof of this basic case and also the proof of the theorem. 
\end{proof}

\bigskip
\centerline{\scshape Acknowledgements}
 
We thank Professor C.-P. Dong for suggesting us classifying the Dirac series of $\mathrm{GL}(n,\mathbb{H})$ and helpful discussions about spin lowest $K$-types, and many valuable suggestions on the draft; thank Professor K.D. Wong for discussions on the Dirac series of $\mathrm{GL}(n,\mathbb{R})$ and helping modify the proof of the uniqueness of spin lowest $K$-type.
Zhang thanks Professor K.D. Wong for kindly supporting to visit Professor C.-P. Dong.  This work is supported by NSFC grant 11901491. We thank the referees for  reading the manuscript carefully and and helpful suggestions to improve the quality of this work.

\end{document}